\documentclass[letterpaper,11pt,oneside,reqno]{amsart}
\usepackage{amsmath,amsthm,amsfonts,amssymb,bbm,color}
\usepackage{graphicx,psfrag,subfigure,url}
\usepackage{cite}
\usepackage{mathrsfs}
\usepackage[DIV13]{typearea}
\usepackage{comment}
\usepackage{caption}

\usepackage{color}
\usepackage[colorlinks=true,
            linkcolor=red,
            urlcolor=blue,
            citecolor=blue]{hyperref}

\numberwithin{equation}{section}

\newcommand{\I}{{\iota}}

\newcommand{\EE}{\ensuremath{\mathbb{E}}}
\newcommand{\PP}{\ensuremath{\mathbb{P}}}
\newcommand{\R}{\ensuremath{\mathbb{R}}}

\newcommand{\Z}{\ensuremath{\mathbb{Z}}}

\newcommand{\f}{\mathbf{f}}

\newcommand{\barspace}{\ensuremath{\mid}}
\newcommand*\pFqskip{8mu}
\catcode`,\active
\newcommand*\pFq{\begingroup
        \catcode`\,\active
        \def ,{\mskip\pFqskip\relax}%
        \dopFq
}
\catcode`\,12
\def\dopFq#1#2#3#4#5#6{%
        {}_{#1}\phi_{#2}\biggl(\genfrac..{0pt}{}{#3}{#4};#5;#6\biggr)%
        \endgroup
}

\newtheorem{theorem}{Theorem}[section]

\newtheorem{lemma}[theorem]{Lemma}
\newtheorem{corollary}[theorem]{Corollary}

\newtheorem{remark}[theorem]{Remark}

\newtheorem{definition}[theorem]{Definition}
\newenvironment{remarks}{\begin{remark}\normalfont}{\end{remark}}

\title[Dynamic ASEP, duality and continuous $q^{-1}$-Hermite polynomials]{Dynamic ASEP, duality and continuous $q^{-1}$-Hermite polynomials}

\author[A. Borodin]{Alexei Borodin}
\address{A. Borodin,
Massachusetts Institute of Technology,
Department of Mathematics,
77 Massachusetts Avenue, Cambridge, MA 02139-4307, USA}
\email{borodin@math.mit.edu}

\author[I. Corwin]{Ivan Corwin}
\address{I. Corwin, Columbia University,
Department of Mathematics,
2990 Broadway,
New York, NY 10027, USA}
\email{ivan.corwin@gmail.com}

\begin{document}

\begin{abstract}
We demonstrate a Markov duality between the dynamic ASEP and the standard ASEP. We then apply this to step initial data, as well as a half-stationary initial data (which we introduce). While investigating the duality for half-stationary initial data, we uncover and utilize connections to continuous $q^{-1}$-Hermite polynomials. Finally, we introduce a family of stationary initial data which are related to the indeterminate moment problem associated with these $q^{-1}$-Hermite polynomials.
\end{abstract}

\sloppy \maketitle

\section{Introduction}

The first main result of this paper demonstrates a Markov duality between the standard asymmetric simple exclusion process \cite{Liggett} (ASEP) and the dynamic ASEP -- a generalization of ASEP with an extra dynamic parameter $\alpha$ which was recently introduced in \cite{IRF} and studied via elliptic generalizations of constructions developed in \cite{BorodinR} and \cite{BorodinPetrovInhomo} (see also \cite{BP-lectures}) to study the (trigonometric) higher-spin six vertex models. We will not make much direct use of this general construction or methods here, and rather seek to develop a more probabilistic understanding of the dynamic ASEP through proving (in Theorem \ref{thm.duality}) a Markov duality.

Markov dualities for the (trigonometric) higher-spin six vertex models have been shown in \cite{CorwinPetrov}, as well as in special cases in earlier works such as \cite{Schutz, BCS, BCdiscrete, CorwinHahn}. This was one motivation for our present investigation. The other came from the fact that \cite[Theorem 10.1 and Corollary 10.6]{IRF} showed that certain expectations (involving our duality function) for step initial data dynamic ASEP are independent of the dynamic parameter $\alpha$. Duality extends this situation to general initial data by showing that the evolution of the expectation of the duality function evaluated along the trajectory of dynamic ASEP is itself $\alpha$-independent. The formulas already derived for step initial data dynamic SSEP (the $q\to 1$ limit of dynamic ASEP) have been useful for asymptotics \cite{IRF,Amol}, and we expect that the duality will prove useful in extending this type of asymptotics to more general initial data.

For  stochastic (trigonometric) higher-spin vertex models and their degenerations (as well as other particle systems which do not fit into that hierarchy but also enjoy similar dualities -- e.g. \cite{Giardina, Kuan, SchutzBelinsky}) duality has proved to be a useful tool. For instance, the $n=1$ duality for those models implies that $q$ raised to the height function satisfies a microscopic stochastic heat equation. This is a microscopic version of the Cole-Hopf transform (also known as the G\"{a}rtner transform in the context of ASEP) which transforms the Kardar-Parisi-Zhang (KPZ) equation into the continuum stochastic heat equation with multiplicative space-time white noise \cite{ICReview} and serves as the starting point for proving convergence under certain scalings of these discrete models to the KPZ equation \cite{BG, ACQ, DemboTsai, CorwinTsai, CorwinShenTsai, CorwinShen, Promit}.

The $n=1$ version of our duality implies that a certain quadratic transformation of $q$ raised to the height function satisfies a microscopic stochastic heat equation. This observation is a possible starting point to try to study certain stochastic PDE limits of the dynamic ASEP. Indeed, our study in Section \ref{sec.stat} of stationary initial data for dynamic ASEP is also useful in this pursuit as it helps to identify the non-trivial scalings one can take, as well as the measures which must remain stationary for the limiting SPDE. Our investigation of the stationary measures suggests that the limit of the associated microscopic stochastic heat equation for dynamic ASEP must preserve a quadratic transform of exponentiated spatial Ornstein-Uhlenbeck process. This, in turn, suggests that the limiting noise for the continuum stochastic heat equation may not be multiplicative. We do not pursue this direction more herein.


The duality we demonstrate here has a limit which becomes a variant of the ASEP self-duality demonstrated in \cite{BCS}. That self-duality generalizes to the top of the hierarchy of stochastic higher-spin six vertex models \cite{CorwinPetrov}. The dynamic analog of those models have recently been introduced and studied in \cite{Amol}. We anticipate that our duality may also similarly lift to those dynamic stochastic higher spin vertex models. It would also be interesting to try to adapt the methods of \cite{Giardina,Kuan} to find new (possibly higher rank, or multi-species) systems which enjoy similar dualities.

The second main result of this paper concerns connecting dynamic ASEP with continuous $q^{-1}$-Hermite polynomials -- special cases of $q$-orthogonal polynomials which fit into the $q$-deformation of the Askey-scheme -- see \cite{GasperRahman, KoekoekSwarttouw}. This connection (whose deeper meaning is yet to be understood) manifests itself in two ways. The first comes in our study of {\it half-stationary} initial data for dynamic ASEP. This initial data arises as the spatial trajectory of a dynamic nearest neighbor random walk (in the sense that the jump probabilities depend on the height) and the one-step transition matrix for this random walk ends up begin diagonal in the basis of continuous $q^{-1}$-Hermite polynomials. (Random walks with similar properties, but with respect to classical orthogonal polynomials, have been studied, see e.g. \cite{Persi}.) In order to apply duality to this initial data, we must compute the expectation of our duality functional at time zero with respect to this (random) initial data. That computation eventually boils down to the diagonalization result just mentioned, and a simple summation identity \eqref{eq.sumid} for these polynomials.

The second manifestation of the connection to these polynomials is in our study of {\it stationary} initial data for dynamic ASEP which we show is related to the known classification of solutions to the indeterminate moment problem for the weight associated with these polynomials \cite{IM}. As the study of dynamic stochastic higher-spin vertex models advances, it will be interesting to see how high in the Askey-scheme these connections go.

\subsubsection*{Outline}
Section \ref{sec.main} contains our main results, namely Theorem \ref{thm.duality} (Markov duality), Theorem \ref{thm.initialdata} (step and half-stationary initial data evaluation formulas), and Theorem \ref{thm.stationary} (stationary initial data). Section \ref{sec.thmproof} contains the proof of Theorem \ref{thm.duality}, and Section \ref{sec.half} contains the proof of Theorem \ref{thm.initialdata} along with the proofs of Lemma \ref{lem.eign} and \ref{sumid}. The continuous $q^{-1}$-Hermite polynomials play a key role in Theorem \ref{thm.stationary} as well as the proofs of Lemma \ref{lem.eign} and \ref{sumid}.

\subsubsection*{Acknowledgements}
I. Corwin was partially supported by the Clay Mathematics Institute through a Clay Research Fellowship, and the Packard Foundation through a Packard Fellowship for Science and Engineering.
A. Borodin was partially supported by the NSF grants DMS-1056390 and DMS-1607901, and by a Radcliffe Institute for Advanced Study Fellowship, and a Simons Fellowship.
\section{Main results}\label{sec.main}

\subsection{$q$-deformed functions}
Let us briefly recall certain $q$-deformed functions that we will use in this paper. The $q$-Pochhammer symbol is defined for $n\in \Z_{\geq 0}\cup \{+\infty\}$ as $(a;q)_n := (1-a)(1-qa)\cdots (1-q^{n-1}a)$, with the case $n=+\infty$ corresponding to the infinite convergent product. If we write $(a_1,\ldots, a_k;q)_n$ this is simply the product of $(a_1;q)_n \cdots (a_k;q)_n$. The $q$-deformed binomial coefficient is given by
$$
{n \choose j}_q := \frac{(q;q)_n}{(q;q)_{n-j}(q;q)_j}.
$$
We will work with the ${}_2 \phi_1$ basic hypergeometric function \cite{GasperRahman} which is defined as
$$
\pFq{2}{1}{a,b}{c}{q}{z}:= \sum_{n=0}^{\infty} \frac{(a;q)_n\, (b;q)_n}{(c;q)_n\, (q;q)_n} z^n.
$$

There are other $q$-deformed functions (such as the continuous $q^{-1}$-Hermite polynomials) which we will introduce as we need them in the main text.
\subsection{Dynamic ASEP}

\begin{figure}
\begin{center}
\includegraphics[scale=.8]{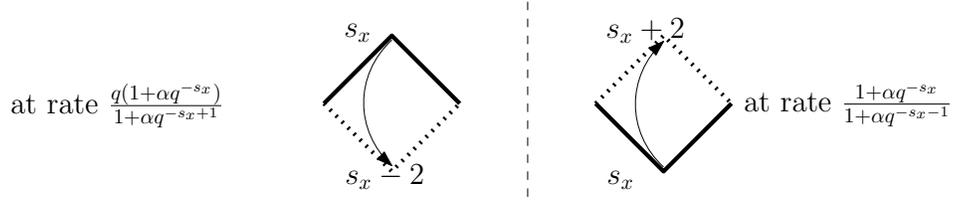}
\end{center}
\caption{Jumps of dynamic ASEP are independent exponentially distributed random variables whose rates are given above. Notice that they depend on the height function $s_x$ so that when $s_x$ gets very high, the jumps favor it decreasing, when it gets very low, the jumps favor it increasing.}\label{fig.dASEP}
\end{figure}

We start by defining the dynamic ASEP.

\begin{definition}
{\it Dynamic ASEP} is a continuous time Markov process which is defined in terms of its time $t$ height function $\vec{s}(t)\in\mathcal{S}$ where $\mathcal{S}=\Big\{\vec{s} = \big(s_{x}\big)_{x\in \Z}: \forall x\in \Z, s_x\in \Z, |s_{x+1}-s_x|=1\Big\}$. In words, $\mathcal{S}$ is the set of all height functions which take integer values at integer $x$ and have slopes $\pm 1$ in between. It remains to specify the stochastic rules for time evolution of dynamic ASEP. For all $x\in \Z$ the following jumps occur according to independent exponential clocks of specified rates (assuming that the jump does not move outside of the state-space):
$$
s_x \mapsto s_x - 2\quad \textrm{at rate}\quad \frac{q(1+\alpha q^{-s_x})}{1+\alpha q^{-s_x +1}},\qquad  s_x \mapsto s_x + 2\quad \textrm{at rate}\quad \frac{1+\alpha q^{-s_x}}{1+\alpha q^{-s_x -1}}.
$$
Here $\alpha\in \R_{\geq 0}$ is a parameter that we assume to be positive. See Figure \ref{fig.dASEP} for an illustration of this process.

The generator (see, e.g. \cite{Liggett}) of this Markov process will be denoted $\mathcal{L}_{q,\alpha}$ and is defined on local functions of the state space. We will also use the generator $L^n_{\ell,r}$ of the standard ASEP \cite{Liggett} on $n$-particle ordered integer configurations $\vec{x}= (x_1>\cdots> x_n)$ (call the set of such states $\mathcal{X}^n$) where particles jump left at rate $\ell$ and right at rate $r$, subject to the exclusion rule.
\end{definition}

\begin{remarks}\label{rem.alpha}
Notice that when $\alpha\to 0$ the downward jump rate becomes $q$ and upward becomes $1$. When $\alpha\to \infty$, the opposite happens as the downward rate becomes $1$ and the upward becomes $q$. In both cases, one recovers standard ASEP by associating slope $-1$ increments with particles, and $+1$ with holes.
\end{remarks}

\subsection{Markov duality}

Our first main result is a Markov duality of dynamic ASEP and the $n$-particle ($n\geq 1$ is arbitrary) standard ASEP with respect to the duality function $Z_{n;q,\alpha}:\mathcal{X}^n \times \mathcal{S} \to \R$ given by
\begin{equation}\label{eq.Zs}
Z_{n;q,\alpha}(\vec{x};\vec{s}) = \prod_{k=1}^{n} \Big(q^{-x_{k}} -\alpha^{-1}q^{2(k-1)} - q^{k-1}\big(q^{\frac{-s_{x_{k}}-x_{k}}{2}}-\alpha^{-1} q^{\frac{s_{x_{k}}-x_{k}}{2}}\big)\Big).
\end{equation}

\begin{theorem}\label{thm.duality}
For all integer $n\geq 1$, $q\in (0,1)$, $\alpha\in(0,\infty)$ and states $\vec{x}\in \mathcal{X}^n$ and $\vec{s}\in \mathcal{S}$,
\begin{equation}\label{eq.markdual}
\mathcal{L}_{q,\alpha} Z_{n;q,\alpha}(\vec{x};\vec{s})=L^{n}_{1,q} Z_{n;q,\alpha}(\vec{x};\vec{s})
\end{equation}
where on the left-hand side the operator acts in the $\vec{s}$ variable, while on the right-hand side the operator acts on the $\vec{x}$ variable. Consequently, for $\vec{x}\in \mathcal{X}^n$,
\begin{equation}\label{eq.evoleq}
\frac{\partial}{\partial t} \EE\big[ Z_{n;q,\alpha}(\vec{x};\vec{s}(t))\big] = L^{n}_{1,q} \EE\big[ Z_{n;q,\alpha}(\vec{x};\vec{s}(t))\big]
\end{equation}
where the expectation $\EE$ is over the time evolution of the dynamic ASEP $\vec{s}(t)$.
\end{theorem}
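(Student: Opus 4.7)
My plan is to verify the pointwise identity \eqref{eq.markdual} by exploiting the product structure of $Z_{n;q,\alpha}$. Writing
\[
f_k(x,s) := q^{-x} - \alpha^{-1}q^{2(k-1)} - q^{k-1}\bigl(q^{(-s-x)/2} - \alpha^{-1}q^{(s-x)/2}\bigr),
\]
one has $Z_{n;q,\alpha}(\vec x;\vec s) = \prod_{k=1}^n f_k(x_k, s_{x_k})$, and each factor $f_k$ depends on $\vec s$ only through the single value $s_{x_k}$. Crucially, there is a further factorization $f_k = A_k B_k$ with $A_k(x,s) = q^{(s-x)/2} - q^{k-1}$ and $B_k(x,s) = q^{(-s-x)/2} + \alpha^{-1}q^{k-1}$, which follows from $q^{(s-x)/2}\cdot q^{(-s-x)/2} = q^{-x}$. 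This refined factorization is what determines the precise prefactors $q^{k-1}$ and $\alpha^{-1}q^{2(k-1)}$ in the definition of $Z_n$, and it plays the decisive role in the adjacency step below.

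Because dynamic ASEP changes a single $s_x$ by $\pm 2$ per jump and the $x_k$ are distinct, only jumps at $x\in\{x_1,\ldots,x_n\}$ alter $Z_n$, and each such jump affects exactly one factor $f_k$; likewise, standard ASEP moves one $x_k$ per jump. Both sides of \eqref{eq.markdual} therefore decompose as $\sum_{k=1}^n (\text{local action on }f_k)\cdot\prod_{j\neq k}f_j$, reducing the identity to term-by-term matching. The matching splits into (i) a single-particle identity at each $k$ for which $x_k\pm 1 \notin \{x_1,\ldots,x_n\}$, and (ii) a coupled two-particle identity at each adjacent pair $x_k = x_{k+1}+1$.

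For (i), writing $\mathcal{A}_k$ for the local action of $\mathcal{L}_{q,\alpha}$ on $f_k(x_k, s_{x_k})$, one verifies
\[
\mathcal{A}_k = \bigl(f_k(x_k-1, s_{x_k-1}) - f_k(x_k, s_{x_k})\bigr) + q\bigl(f_k(x_k+1, s_{x_k+1}) - f_k(x_k, s_{x_k})\bigr)
\]
case-by-case in the four slope configurations $(s_{x_k-1}-s_{x_k}, s_{x_k+1}-s_{x_k})\in\{\pm 1\}^2$. In the two monotone configurations, no dynamic ASEP jump at $x_k$ is allowed so $\mathcal{A}_k=0$, and a direct expansion shows the right side vanishes; in the local-min/max configurations, the denominator of the single allowed jump rate cancels against a factor in $f_k(x_k, s_{x_k}\pm 2)-f_k(x_k, s_{x_k})$, producing the required right side.

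The hard part is (ii). Using the single-particle identity (which continues to hold formally at adjacent $k$ with $s_{x_k-1} = s_{x_{k+1}}$), the adjacency contribution reduces to the algebraic cancellation
\[
\bigl(f_k(x_{k+1}, s_{x_{k+1}}) - f_k(x_k, s_{x_k})\bigr)f_{k+1}(x_{k+1}, s_{x_{k+1}}) + q\bigl(f_{k+1}(x_k, s_{x_k}) - f_{k+1}(x_{k+1}, s_{x_{k+1}})\bigr)f_k(x_k, s_{x_k}) = 0,
\]
which accounts for the two exclusion-blocked jumps on the right of \eqref{eq.markdual}. The $AB$-factorization is what makes this work: depending on whether the shared slope $s_{x_k}-s_{x_{k+1}}$ equals $+1$ or $-1$, one of the pairs $(A_k, A_{k+1})$ or $(B_k, B_{k+1})$ matches identically at the shifted arguments, while the other satisfies a $q$-shift relation such as $B_{k+1}(x_{k+1}, s_{x_{k+1}}) = q B_k(x_k, s_{x_k})$. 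Substituting these in turns each of the two bracketed differences into the same multiple of $A_k B_k A_{k+1} B_{k+1}$ up to an opposite sign, and the $q$ appearing in the identity is precisely the shift factor produced by the $AB$-factorization. Once \eqref{eq.markdual} holds pointwise, \eqref{eq.evoleq} follows by a standard generator-semigroup argument, using that $Z_n$ is a local function of $\vec s$.
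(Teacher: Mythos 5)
Your argument is correct and is essentially the paper's proof reorganized: the paper establishes the identity cluster-by-cluster (Lemma \ref{lem.subdual}) by evaluating the local terms in the four slope configurations and telescoping via $L(b,n)+B(b,n)=B(b+1,n)$, which is exactly your adjacent-pair cancellation, while your single-site identity is the one-particle cluster case $L(a-1,n)=A(a,n)+B(a,n)$. Your $A_kB_k$ factorization is a clean way of packaging the same computations the paper carries out in the $(N,\eta)$ coordinates, and I have checked that your single-site identity and the two-site cancellation do hold in all the relevant slope cases.
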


This result is proved in Section \ref{sec.thmproof}.

\begin{remarks}\label{rem.alphadual}
In the limit $\alpha\to 0$, up to dividing by constants (which do not affect the duality) we find, in light of Remark \ref{rem.alpha}, that standard ASEP with left jump rate $q$ and right jump rate $1$, written in terms of variables $N_x = (s_x-x)/2$, is dual to the same ASEP with $n$-particles written in terms of particle locations $\vec{x}\in \mathcal{X}^n$ with respect to the duality functional
$$
Z(\vec{x};\vec{N}) = \prod_{k=1}^{n} (q^{k-1} - q^{N_{x_{k}}}).
$$
This is not exactly the same duality functional as in \cite[Theorem 4.2]{BCS} which does not have the $q^{k-1}$ factors (i.e. there it is just $\prod_{k=1}^{n} q^{N_{x_{k}}}$). The $\alpha\to \infty$ limit also produces a duality, but with a simple conjugation it is seen to be equivalent to the $\alpha\to 0$ one.
\end{remarks}

\subsection{Initial data evaluation formulas and continuous $q^{-1}$-Hermite polynomials}

Our second set of results pertains to using the duality in Theorem \ref{thm.duality} to compute formulas for expectations (with respect to the $\vec{s}(t)$ process evolution) of $Z_{n;q,\alpha}(\vec{x};\vec{s}(t))$ for step initial data and half-stationary initial data for $\vec{s}(t)$. While step initial data is defined as usual for ASEP (see, e.g., \cite{ICReview}), half-stationary is more involved and is introduced below. In Remark \ref{sec.origins} we describe how this initial data arises by specializing certain results in \cite{IRF}.

\begin{definition}\label{def.halfstat}
{\it Step initial data} means that $s_x=s_x(0) = |x|$. Half-stationary initial data means that $s_{x}=x$ for $x\geq 1$, while for $x\leq 0$, $s_x$ is defined according to a Markov chain under which
$$
s_{x-1} = s_{x} +1 \textrm{ with probability } \frac{q^{s_x}}{\alpha + q^{s_x}}, \textrm{  and }
s_{x-1} = s_{x} -1 \textrm{ with probability } \frac{\alpha}{\alpha + q^{s_x}}.
$$
Define the generator of the half-stationary initial data Markov process to be the operator $K$ acting on functions of $f:\Z\to \R$ by
\begin{equation}\label{eq.keq}
(Kf)(s) = \frac{q^{s}}{\alpha + q^{s}} f(s+1) + \frac{\alpha}{\alpha + q^{s}} f(s-1).
\end{equation}
Clearly, for $x\leq 0$, $\EE[f(s_{x-1})|s_{x}=s] = (Kf)(s)$.
\end{definition}


\begin{remarks}\label{sec.origins}
As was shown in \cite{AB} (and exploited further in \cite{AA}), the half-stationary initial data for the usual ASEP, as well as a certain integrable multi-parameter generalization, can be obtained through a limit transition from the stochastic higher spin six vertex in a quadrant. The nontrivial part of these initial data comes from tuning the vertex model's inhomogeneities in the first few columns of the quadrant in a special way.

Quite similarly, the half-stationary initial data of Definition \ref{def.halfstat} can be obtained through a limit of the stochastic IRF model in the quadrant considered in \cite{IRF} (the degeneration of that IRF model to the dynamic ASEP is discussed in \cite[Section 9.4]{IRF}). More exactly, the pair of complementary probabilities $b_k^{\textrm{stoch}}$ and $d_k^{\textrm{stoch}}$ of \cite[eq.(1.1)]{IRF} converge to the transition probabilities of the Markov chain from Definition \ref{def.halfstat}
as $e^{-\I\pi(z-w+(\Lambda+1)\eta)}\to 0$ and $-e^{2\I\pi\lambda-4\I\pi\Lambda\eta}\to\alpha$, where $\I=\sqrt{-1}$. In order to turn this into the half-stationary initial data for dynamic ASEP, one needs to perform such a limit transition in the first column of the stochastic IRF model in the quadrant, while the IRF parameters in the rest of the quadrant should follow the degeneration path of \cite[Section 9.4]{IRF}. Correspondingly, the integral representation of Corollary \ref{cor.int} below can be viewed as a limit of \cite[Theorem 1.1]{IRF}. Note that in Corollary \ref{cor.int}, we are limited to moments with different $x_i$, while the IRF model methods described above would likely show that those formulas extend to the Weyl chamber boundary when multiple $x_i$ coincide.
\end{remarks}

The following theorem evaluates the duality functional for step and half-stationary initial data. It should be noted that though the duality functional (and in the half-stationary case, also the initial data) depends on $\alpha$, the resulting evaluation is $\alpha$-independent. In Section \ref{sec.intform} we use this $\alpha$-independence to write down integral formulas for expectations of dynamic ASEP evaluated through the duality functional at later times.

\begin{theorem}\label{thm.initialdata}
For step initial data $\vec{s}_{\textrm{step}}$, $n\geq 1$ and $\vec{x}\in \mathcal{X}^n$,
\begin{equation}\label{eq.alphaindstep}
(-\alpha^{-1};q)_n \,Z_{n;q,\alpha}(\vec{x};\vec{s}_{\textrm{step}}) = \prod_{k=1}^{n}\big(q^{-x_{k}\mathbf{1}_{x_{k}\leq 0}}-q^{k-1}\big).
\end{equation}
For (random) half-stationary initial data $\vec{s}_{\textrm{half}}$, $n\geq 1$ and $\vec{x}\in \mathcal{X}^n$,
\begin{equation}\label{eq.alphaindhalf}
\frac{\alpha^n}{q^{n(n-1)/2}} \, \EE\big[Z_{n;q,\alpha}(\vec{x};\vec{s}_{\textrm{half}})\big] = \prod_{k=1}^{n-1}\big(q^{(1-x_{k})\mathbf{1}_{x_{k}\leq 1 }}-q^{k-1}\big),
\end{equation}
where the expectation $\EE$ is over the randomness of $\vec{s}_{\textrm{half}}$.
\end{theorem}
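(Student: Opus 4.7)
The plan splits along the two initial data considered, and uses different tools for each. For step initial data the proof is a direct substitution: plugging $s_{x_k} = |x_k|$ into each factor of \eqref{eq.Zs} and simplifying shows that the $k$-th factor equals $(1+\alpha^{-1}q^{k-1})(q^{-x_k}-q^{k-1})$ when $x_k \leq 0$ and $(1-q^{k-1})(q^{-x_k}+\alpha^{-1}q^{k-1})$ when $x_k \geq 1$. Since $x_1 > \cdots > x_n$, either all $x_k \leq 0$ (and then the $\alpha$-dependent prefactors multiply to $\prod_{k=1}^n(1+\alpha^{-1}q^{k-1}) = (-\alpha^{-1};q)_n$ while the remaining factors $q^{-x_k}-q^{k-1}$ reproduce the right-hand side of \eqref{eq.alphaindstep}), or $x_1 \geq 1$ in which case the $k=1$ factor $1-q^0 = 0$ kills both sides.

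For the half-stationary identity \eqref{eq.alphaindhalf} I would first rewrite each factor of \eqref{eq.Zs} as $f_k(x,s) = (q^{-x}-\alpha^{-1}q^{2(k-1)}) + q^{k-1-x/2}\,g(s)$ with $g(s) := \alpha^{-1}q^{s/2}-q^{-s/2}$. A direct computation shows $Kg = q^{1/2}g$, which is the $m=1$ instance of Lemma \ref{lem.eign} asserting that the degree-$m$ continuous $q^{-1}$-Hermite polynomial evaluated at $g(s)$ is a $K$-eigenfunction with eigenvalue $q^{m/2}$. Using the Markov property of the chain $K$ iteratively from the rightmost particle inward and the deterministic boundary value $s_1 = 1$, the expectation can be written as
\[
\EE\bigl[Z_{n;q,\alpha}(\vec{x};\vec{s}_{\textrm{half}})\bigr] = \bigl(K^{1-x_1} F_1\bigr)(1), \qquad F_{n+1}\equiv 1,\quad F_k(s) := f_k(x_k,s)\cdot\bigl(K^{x_k-x_{k+1}} F_{k+1}\bigr)(s).
\]
Each $F_k$ is a polynomial of degree $n-k+1$ in $g(s)$, so expanding it in the continuous $q^{-1}$-Hermite basis converts every $K^m$-application into a diagonal multiplication by the corresponding eigenvalues, reducing the whole recursion to an explicit combinatorial sum indexed by which Hermite degree is retained at each step.

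The main obstacle is collapsing this combinatorial sum into the closed product on the right-hand side of \eqref{eq.alphaindhalf}. Here the summation identity \eqref{eq.sumid} for the continuous $q^{-1}$-Hermite polynomials does the crucial work, producing the telescoping that forces the $\alpha$-dependence to cancel despite its explicit presence in both $f_k$ and $K$ and leaves only the stated product. The boundary cases $x_1 \geq 1$ are automatic: the value $s_{x_1} = x_1$ is then deterministic, direct substitution gives $f_1(x_1,x_1) = 0$, and the right-hand side also vanishes through its $k=1$ factor $q^{(1-x_1)\mathbf{1}_{x_1\leq 1}} - 1$, so both sides trivially agree.
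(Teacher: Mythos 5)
Your treatment of the step initial data is correct and is essentially identical to the paper's: the same factorization of each factor of \eqref{eq.Zs} into $(1+\alpha^{-1}q^{k-1})(q^{-x_k}-q^{k-1})$ for $x_k\le 0$ and $(1-q^{k-1})(q^{-x_k}+\alpha^{-1}q^{k-1})$ for $x_k\ge 0$, with the $k=1$ factor killing everything when $x_1\ge 0$. Your setup for the half-stationary case is also sound and matches the paper's machinery: the rewriting of each factor as an affine function of $g(s)=-\alpha^{-1/2}\f(s)$, the eigenrelation $Kg=q^{1/2}g$ as the $m=1$ case of Lemma \ref{lem.eign}, and the sequential conditioning $\EE[Z]=(K^{1-x_1}F_1)(1)$ with the Hermite-basis expansion turning each $K^m$ into a diagonal operator are all exactly the ingredients the paper uses (its equation \eqref{eq.keyit}).

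However, there is a genuine gap at precisely the point you flag as ``the main obstacle.'' You assert that the summation identity \eqref{eq.sumid} collapses the resulting combinatorial sum to the product $\prod_k(q^{(1-x_k)}-q^{k-1})$ by ``telescoping,'' but \eqref{eq.sumid} is a single scalar identity per $n$: it evaluates one specific weighted sum of $h_j(\f(1)/2\mid q)$, whereas the target product has $2^n$ monomials in the variables $q^{t_k}$ ($t_k=1-x_k$), each of whose coefficients would need to be matched. The paper sidesteps this by a characterization argument: it shows the left-hand side is a multinomial of degree at most one in each $q^{t_k}$ (because \eqref{eq.keyit} shifts the Hermite index by exactly $\pm1$ and only the ``up'' branch carries a factor $q^{t}$), that it vanishes whenever $t_k=k-1$ (a separate combinatorial path-hitting argument --- the trajectory $y(t)$ started at the origin must pass through one of the points $(t_{\ell+1},\ell)$), and that its coefficient of $q^{t_1}\cdots q^{t_n}$ equals $1$; only this last item uses \eqref{eq.sumid}, via the ``all up'' branch of the recursion. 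These three properties uniquely determine the multinomial. Your proposal contains neither the vanishing argument nor any substitute for it, and without the characterization the claimed telescoping is unsubstantiated (and, as far as I can see, not achievable from \eqref{eq.sumid} alone). You should either supply the full combinatorial collapse --- which would require identifying every coefficient, not just the leading one --- or adopt the degree/vanishing/leading-coefficient characterization.
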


We give a brief proof of \eqref{eq.alphaindstep} here since it is quite simple and then sketch the main points in the proof of \eqref{eq.alphaindhalf}, which is presented completely in Section \ref{sec.half}.

\begin{proof}[Proof of \eqref{eq.alphaindstep}]
For $s_x=|x|$ for all $x\in \Z$ we can factor $Z_{n;q,\alpha}(\vec{x};\vec{s})$ into the product
$$\prod_{k=1}^{n} \big(q^{-x_{k} \mathbf{1}_{x_{k}\leq 0}}-q^{k-1}\big)\big(q^{-x_{k}\mathbf{1}_{x_{k}\geq 0}}+\alpha^{-1} q^{k-1}\big).$$
If $x_1\geq 0$ this product is clearly 0, and since $x_n<\cdots<x_1$, this means that $Z_{n;q,\alpha}(\vec{x};\vec{s})=0$ unless all $x_i\leq 0$ (as is the case with the right-hand side of \eqref{eq.alphaindstep} as well). If all $x_i\leq 0$, the first term in the product above becomes $(q^{-x_{k}}-q^{k-1})$ and the second becomes $(1+\alpha^{-1}q^{k-1})$. The product of these second terms cancels with $(-\alpha^{-1};q)_n$ and we are left with the desired equality of \eqref{eq.alphaindstep}.
\end{proof}

We turn now to sketch the main ideas in the proof of \eqref{eq.alphaindhalf} (the complete proof is given in Section \ref{sec.half}). Our proof relies on a connection between the Markov chain which produces the half-stationary initial data, and the continuous $q^{-1}$-Hermite polynomials. These polynomials, denoted $h_{n}(x\barspace q)$, fit into the $q$-Askey-Wilson scheme and hence can be expressed in terms of basic hypergeometric series. In particular, they arise as limits of the Al-Salam-Chihara polynomials. Relying on \cite[Section 2]{CK} we will define the $h_n$ via the three term recursion that they satisfy.

\begin{definition}
The {\it continuous $q^{-1}$-Hermite orthogonal polynomials} $\big\{h_n(x\barspace q)\big\}_{n\geq 0}$ are given by the following three-term recursion:
\begin{equation}\label{eq.threeterm}
2x h_n(x\barspace q) = h_{n+1}(x\barspace q) + (q^{-n}-1) h_{n-1}(x\barspace q), \quad \textrm{with} \quad h_{-1}(x\barspace q)=0,\quad \textrm{and}\quad h_{0}(x\barspace q)=1.
\end{equation}
\end{definition}

Recall the operator $K$ from Definition \ref{def.halfstat}, and define the function
\begin{equation}\label{eq.f}
\f(s) = q^{-s/2} \alpha^{1/2} - q^{s/2}\alpha^{-1/2}, \qquad s\in \Z.
\end{equation}
We have the following lemma which shows that the functions $s\mapsto h_{n}\big(\f(s)/2\barspace  q\big)$ diagonalize $K$. This lemma is proved in Section \ref{sec.half}.

\begin{lemma}\label{lem.eign}
For all integer $n\geq 0$,
\begin{equation}
Kh_n\big(\f(s)/2\barspace q\big) = q^{n/2} h_n\big(\f(s)/2\barspace q\big).
\end{equation}
The left-hand side should be interpreted as $(Kg)(s)$ where $g(s)= h_n\big(\f(s)/2\barspace q\big)$.
\end{lemma}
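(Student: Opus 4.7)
My plan is to reduce Lemma~\ref{lem.eign} to a direct coefficient comparison by passing to the multiplicative variable $u := q^{-s/2}\alpha^{1/2}$. Under this substitution $\f(s)/2 = (u-u^{-1})/2$, the shifts $s\mapsto s\pm 1$ correspond to $u\mapsto q^{\mp 1/2}u$, and $q^{s} = \alpha u^{-2}$, so
$$
\frac{q^{s}}{\alpha+q^{s}} = \frac{1}{u^{2}+1}, \qquad \frac{\alpha}{\alpha+q^{s}} = \frac{u^{2}}{u^{2}+1}.
$$
After multiplying through by $u^{2}+1$, the desired identity $(Kh_{n})(s) = q^{n/2}h_{n}(\f(s)/2\barspace q)$ becomes the Laurent polynomial identity
$$
h_{n}\!\big(\tfrac{\f(s+1)}{2}\barspace q\big) + u^{2}\,h_{n}\!\big(\tfrac{\f(s-1)}{2}\barspace q\big) = q^{n/2}(u^{2}+1)\,h_{n}\!\big(\tfrac{\f(s)}{2}\barspace q\big).
$$

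First I would derive the explicit expansion
$$
h_{n}\big((u-u^{-1})/2\barspace q\big) = \sum_{k=0}^{n} {n \choose k}_{q} (-1)^{k} q^{k(k-n)}\,u^{n-2k}
$$
by induction on $n$ from the three-term recursion~\eqref{eq.threeterm} together with the $q$-Pascal identity. Under $u\mapsto q^{\mp 1/2}u$ the monomial $u^{n-2k}$ picks up the factor $q^{\mp(n-2k)/2}$, so substituting this expansion turns both sides of the cleared identity into explicit Laurent polynomials in $u$.

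Then I would compare the coefficient of each monomial $u^{n-2k}$ for $k\in\{-1,0,\ldots,n\}$. The boundary terms $u^{n+2}$ (from $k=-1$) and $u^{-n}$ (from $k=n$) match immediately, since $c_{0} = 1$ and $c_{n} = (-1)^{n}$ where $c_{k} := {n \choose k}_{q}(-1)^{k} q^{k(k-n)}$. For each interior $k\in\{0,\ldots,n-1\}$ the identity collapses to the two-term relation
$$
c_{k}\big(q^{-(n-2k)/2} - q^{n/2}\big) + c_{k+1}\big(q^{(n-2k-2)/2} - q^{n/2}\big) = 0.
$$
Factoring $q^{-(n-2k)/2}$ out of the first parenthesis and $q^{(n-2k-2)/2}$ out of the second reduces this to the single ratio identity $c_{k+1}/c_{k} = -\frac{1-q^{n-k}}{1-q^{k+1}}\,q^{2k+1-n}$, which follows directly from ${n \choose k+1}_{q}/{n \choose k}_{q} = (1-q^{n-k})/(1-q^{k+1})$ combined with the $q$-power prefactor in $c_{k}$.

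The main bookkeeping obstacle is aligning the two Laurent series after multiplication by $u^{2}$ shifts one of them by one in index, and handling the two boundary monomials $u^{\pm n+2},u^{-n}$ separately; once this is done, no deeper property of the continuous $q^{-1}$-Hermite polynomials beyond the three-term recursion is required.
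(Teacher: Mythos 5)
Your argument is correct, and it takes a genuinely different route from the paper's. The paper does not compute with $K$ directly: it splits $K$ into its even/odd parity restrictions, forms the second-order difference operator $L=K^{2}$, imports the known eigenrelation $L\psi_n=q^{n}\psi_n$ for $\psi_n(s)=h_n(\f(s)/2\barspace q)$ from Christiansen--Koelink, and then must argue (via conjugating $L$ to a self-adjoint operator on a weighted $\ell^{2}$ space and invoking completeness and orthogonality of the eigenfunctions) that the $q^{n}$-eigenspace is one-dimensional, so that $K\psi_n$ is forced to be a scalar multiple of $\psi_n$; the scalar $q^{n/2}$ is then fixed by leading coefficients. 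You bypass all of the functional analysis and the parity bookkeeping by substituting $u=q^{-s/2}\alpha^{1/2}$ and using the explicit Laurent expansion $h_n\big((u-u^{-1})/2\barspace q\big)=\sum_{k=0}^{n}{n \choose k}_q(-1)^{k}q^{k(k-n)}u^{n-2k}$; I checked that this expansion does follow by induction from \eqref{eq.threeterm} together with the $q$-Pascal identity (the induction step reduces to ${n \choose k-1}_q(1-q^{n+1-k})=(1-q^{n}){n-1 \choose k-1}_q$), and that your two-term coefficient relation $c_{k+1}/c_{k}=-\frac{1-q^{n-k}}{1-q^{k+1}}q^{2k+1-n}$ is exactly what the expansion gives, so the interior coefficients match. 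Two small remarks: the boundary monomials $u^{n+2}$ and $u^{-n}$ match not because of the particular values $c_{0}=1$ and $c_{n}=(-1)^{n}$ but because for $k=-1$ and $k=n$ the relevant bracket $\big(q^{\pm(\cdot)/2}-q^{n/2}\big)$ vanishes identically (equivalently, each side contributes the same single term $q^{n/2}c_{0}$, resp.\ $q^{n/2}c_{n}$); and your method yields the eigenrelation for all nonzero $\alpha$ and $q$ as a polynomial identity, with no integrability hypothesis, which is a mild strengthening. The trade-off is that the paper's route generalizes to situations where no closed-form expansion is available, whereas yours is self-contained and uses nothing beyond the three-term recursion.
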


To prove \eqref{eq.alphaindhalf} we observe that the right-hand side is a multinomial in the variables $q^{-x_{k}}$ for $k=1,\ldots, n$ which has at most degree one in each variable and vanishes when $x_{k}= 2-k$ for any $k\in \{1,\ldots, n\}$ (we are assuming all $x_i\leq 1$ since otherwise both sides are easily seen to be zero as in the proof of \eqref{eq.alphaindstep}) and which has coefficient $q^{-n}$ in its top degree term $\prod_{k=1}^{n-1} q^{-x_{k}}$. This characterizes the right-hand side completely, hence it suffices to check that the same characterization holds on the left-hand side. The vanishing of the left-hand side when $x_{k}= 2-k$ for any $k\in \{1,\ldots, n\}$ follows easily by looking at the range of $s_x$. What remains is to show that the left-hand side is a multinomial which has at most degree one in each term and that it has the desired top degree coefficient. Since the operator $K$ computes expectations of functions under the trajectory of $s_x$, we can use the eigenrelation in Lemma \ref{lem.eign} to extract powers of $q^{x_{k}}$ and confirm the multinomiality and degree conditions. The same sort of considerations produce a formula for the maximal degree coefficient, expressed in terms of sums of weighted $h_n$. Establishing that this evaluates to $q^{-n}$ boils down to the following identity which is proved in Section \ref{sec.half}.

\begin{lemma}\label{sumid}
For all integers $n\geq 0$ and any non-zero $\alpha$ and $q$
\begin{equation}\label{eq.sumid}
\frac{\alpha^n}{q^{n(n+1)/2}} \sum_{j=0}^{n} (-1)^j {n \choose j}_q q^{j(j+1)/2} (q\alpha)^{-j/2} h_j\big(\f(1)/2\barspace q\big) = 1.
\end{equation}
\end{lemma}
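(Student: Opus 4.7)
The plan is to plug in an explicit formula for the continuous $q^{-1}$-Hermite polynomials, interchange summations, and collapse the resulting double sum via a $q$-binomial identity.

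First I would observe that $\f(1)/2 = (z - z^{-1})/2$ with $z := q^{-1/2}\alpha^{1/2}$. A short induction on $n$ using the three-term recursion \eqref{eq.threeterm} establishes the explicit formula
\begin{equation*}
h_n\!\left(\tfrac{z-z^{-1}}{2}\,\big|\,q\right) = \sum_{k=0}^{n}(-1)^k {n \choose k}_q q^{k(k-n)}\, z^{n-2k}.
\end{equation*}
Substituting this into the left-hand side of \eqref{eq.sumid} and writing $z^{j-2k} = q^{-(j-2k)/2}\alpha^{(j-2k)/2}$ turns the identity into a claim about a double sum over $0\le k\le j\le n$ whose summand is a product of $(-1)^{j+k}{n\choose j}_q{j\choose k}_q$, a power of $q$, and $\alpha^{-k}$ (the $\alpha$-exponents conveniently telescope to $-k$, independently of $j$).

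Next I would swap the order of summation, make the change of variable $m=j-k$, and apply the classical identity ${n\choose m+k}_q{m+k\choose k}_q = {n\choose k}_q{n-k\choose m}_q$ to separate the $k$- and $m$-dependencies. Simplifying the remaining $q$-exponent then yields $\binom{m}{2}$ plus a term depending only on $k$, so the inner sum becomes $\sum_{m=0}^{n-k}(-1)^m{n-k\choose m}_q q^{\binom{m}{2}}$. By the $q$-binomial theorem (specializing $x=1$ in $(x;q)_N = \sum_m (-x)^m q^{\binom{m}{2}}{N\choose m}_q$) this inner sum equals $(1;q)_{n-k} = \delta_{n,k}$.

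Only the $k=n$ term therefore survives in the outer sum; a brief accounting of the remaining factors at $k=n$ produces $\alpha^{-n} q^{n(n+1)/2}$, so the required prefactor $\alpha^n/q^{n(n+1)/2}$ renormalizes the whole expression to $1$. The only real obstacle is the bookkeeping of $q$-exponents coming from $q^{j(j+1)/2}$, $(q\alpha)^{-j/2}$, $q^{k(k-j)}$, and $z^{j-2k}$; once these are combined and the change of variable $m=j-k$ is made, the collapse via the $q$-binomial theorem is forced.
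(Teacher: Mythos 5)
Your proof is correct, but it takes a genuinely different route from the paper. The paper's proof goes through the generating function $\sum_j \frac{(\gamma;q)_j}{(q;q)_j}H_j(x\barspace q)t^j$ for the continuous $q$-Hermite polynomials (Koekoek--Swarttouw (3.26.12)), converts to $h_j$ via $h_n(x\barspace q)=\I^{-n}H_n(\I x\barspace q^{-1})$, truncates the sum by setting $\gamma=q^n$, and then evaluates the resulting terminating ${}_2\phi_1$ at a special argument using the $q$-Chu--Vandermonde identity (Gasper--Rahman (1.5.3)); because the generating function manipulations are justified for $q>1$, the paper must finish with an analytic continuation in $q$. You instead insert the explicit monomial expansion $h_j\big(\tfrac{z-z^{-1}}{2}\barspace q\big)=\sum_k(-1)^k{j\choose k}_q q^{k(k-j)}z^{j-2k}$ at $z=q^{-1/2}\alpha^{1/2}$, and the bookkeeping works exactly as you say: the $\alpha$-exponent collapses to $-k$, the $q$-exponent becomes $\tbinom{m}{2}+\tbinom{k+1}{2}$ after setting $m=j-k$, the trinomial revision ${n\choose m+k}_q{m+k\choose k}_q={n\choose k}_q{n-k\choose m}_q$ separates the sums, and $\sum_m(-1)^m{n-k\choose m}_q q^{\binom{m}{2}}=(1;q)_{n-k}=\delta_{k,n}$ kills everything but $k=n$, which contributes exactly $\alpha^{-n}q^{n(n+1)/2}$. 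Your argument is more elementary and self-contained (only the three-term recursion, a $q$-Pascal induction for the explicit formula, and the finite $q$-binomial theorem), and it is valid for all nonzero $q$ directly, with no need for the analytic continuation step; what the paper's route buys is that it sidesteps the double-sum bookkeeping entirely by quoting two standard basic-hypergeometric identities. The one item you should spell out is the induction establishing the explicit formula for $h_n$ at a $\sinh$-type argument (equivalently, cite it from Ismail--Masson), since everything downstream rests on it.
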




\subsection{Integral formulas}\label{sec.intform}

As a corollary of Theorems \ref{thm.duality} and \ref{thm.initialdata} we find that expectations of $Z_{n;q,\alpha}$ solve the ASEP evolution equation with $\alpha$-independent initial data. We are able to write down (and readily confirm) explicit integral solutions to these equations with the specified initial data. One way to produce such formulas is through the spectral theory for ASEP developed in \cite{BCPS}.

\begin{corollary}\label{cor.int}
For step initial data $\vec{s}(0)=\vec{s}_{\textrm{step}}$, $n\geq 1$ and $\vec{x}\in \mathcal{X}^n$,
$$
E_{\textrm{step}}(t;\vec{x}) := (-\alpha^{-1};q)_n \EE\big[\,Z_{n;q,\alpha}(\vec{x};\vec{s}(t))\big]
$$
solves the system of ODEs
$$
\frac{d}{dt} E_{\textrm{step}}(t;\vec{x}) = L^{n}_{1,q} E_{\textrm{step}}(t;\vec{x}), \quad \textrm{with} \quad E_{\textrm{step}}(0;\vec{x}) =  \prod_{k=1}^{n}\big(q^{-x_{k}\mathbf{1}_{x_{k}\leq 0}}-q^{k-1}\big).
$$
This equation admits the following explicit integral solution
$$
E_{\textrm{step}}(t;\vec{x}) = \frac{q^{n(n-1)/2}}{(2\pi \i)^n} \oint \cdots \oint \prod_{1\leq i<j\leq n} \frac{y_i -y_j}{y_i-q y_j} \prod_{i=1}^{n} \left(\left(\frac{1-y_i}{1-qy_i}\right)^{x_i} \exp\left\{\frac{(1-q)^2 y_i}{(1-y_i)(1-q y_i)}\, t\right\}\right) \frac{dy_i}{y_i},
$$
with integration contours given by small positively oriented loops around $1$.

For half-stationary initial data $\vec{s}(0)=\vec{s}_{\textrm{half}}$, $n\geq 1$ and $\vec{x}\in \mathcal{X}^n$,
$$
E_{\textrm{half}}(t;\vec{x}) := \frac{\alpha^n}{q^{n(n-1)/2}} \EE\big[\,Z_{n;q,\alpha}(\vec{x};\vec{s}(t))\big]
$$
solves the system of ODEs
$$
\frac{d}{dt} E_{\textrm{half}}(t;\vec{x}) = L^{n}_{1,q} E_{\textrm{half}}(t;\vec{x}), \quad \textrm{with} \quad E_{\textrm{half}}(0;\vec{x}) =  \prod_{k=1}^{n}\big(q^{(1-x_{k})\mathbf{1}_{x_{k}\leq 1}}-q^{k-1}\big).
$$
Since $E_{\textrm{half}}(0;\vec{x}) = E_{\textrm{step}}(0;\vec{x}-\vec{1})$ (with $\vec{1}$ a vector of length $n$ with all $1$'s), it immediately follows that $E_{\textrm{half}}(t;\vec{x})= E_{\textrm{step}}(t;\vec{x}-\vec{1})$ as well.
\end{corollary}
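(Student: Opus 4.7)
The plan is to assemble the differential equation claim from the two main results already at hand, then verify the explicit integral formula by the standard Bethe-ansatz recipe, and finally obtain the half-stationary case from the step case by translation invariance. First, combining Theorem \ref{thm.duality} (equation \eqref{eq.evoleq}) with Theorem \ref{thm.initialdata} gives the ODE $\frac{d}{dt}E_\text{step}(t;\vec{x}) = L^n_{1,q} E_\text{step}(t;\vec{x})$ together with the stated initial value at $t=0$ essentially for free: the prefactor $(-\alpha^{-1};q)_n$ converts the $\alpha$-dependent expectation into the $\alpha$-independent product, and the ODE passes through this multiplicative constant. So what is actually in need of proof is that the integral formula is the unique solution to this initial value problem.

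To verify the integral formula satisfies the ODE, I would follow the standard template. Extend $L^n_{1,q}$ to the free operator $\tilde L$ acting on $\Z^n$ via $(\tilde L f)(\vec x) = \sum_{i=1}^n \big[f(\vec x - e_i) + q f(\vec x + e_i) - (1+q) f(\vec x)\big]$. Each factor $\big(\tfrac{1-y_i}{1-qy_i}\big)^{x_i}$ is an eigenfunction of the $i$th one-dimensional free operator with eigenvalue $\tfrac{(1-q)^2 y_i}{(1-y_i)(1-qy_i)}$, so the integral formula trivially solves $\frac{d}{dt} u = \tilde L u$ by differentiation under the integral. To replace $\tilde L$ with $L^n_{1,q}$ on the Weyl chamber $\mathcal X^n$, one must check the two-body boundary conditions at configurations $x_i = x_{i+1}+1$; a short symmetrization argument against the Bethe factor $\prod_{i<j}\tfrac{y_i-y_j}{y_i-qy_j}$ (swap the roles of $y_i$ and $y_j$ in the pairwise factor and recognize the resulting cancellation) confirms these boundary relations, and this is precisely why the Bethe cross-product factor is inserted.

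Next I would verify the initial condition at $t=0$ by residue computation. Shrinking the contours to small loops around $y=1$, each variable picks up its residue at $y_i = 1$ (the only potentially singular point inside, since poles of $\tfrac{1}{y_i - q y_j}$ lie at $q y_j$, outside small enough contours). The factor $\big(\tfrac{1-y_i}{1-qy_i}\big)^{x_i}$ vanishes for $x_i>0$ and contributes a pole of order $-x_i$ for $x_i\le 0$. Carrying out the residue computation (most conveniently via expansion in $1-y_i$) produces, after accounting for the $q^{n(n-1)/2}$ prefactor and the contribution of the pairwise factor at $y_i=1$, exactly $\prod_{k=1}^n (q^{-x_k \mathbf{1}_{x_k\le 0}} - q^{k-1})$. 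With the ODE plus initial condition matching on both sides, uniqueness of the solution (which can be invoked from the ASEP spectral theory of \cite{BCPS}, restricting to the finite-dimensional subspace of initial data that depend on $\vec x$ only through the indicators $\mathbf{1}_{x_k\le 0}$ with bounded growth) closes the argument.

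For the half-stationary case, Theorem \ref{thm.initialdata} directly evaluates $E_\text{half}(0;\vec x)$ to the claimed product. Shifting indices shows that this product equals $E_\text{step}(0;\vec x - \vec 1)$. Since the ASEP generator $L^n_{1,q}$ is translation invariant, the function $(t,\vec x)\mapsto E_\text{step}(t;\vec x - \vec 1)$ solves the ODE with the same initial data as $E_\text{half}(t;\vec x)$; uniqueness (again from \cite{BCPS}) gives the identity $E_\text{half}(t;\vec x) = E_\text{step}(t;\vec x - \vec 1)$, and the integral representation follows by substitution. I expect the main obstacle to be the residue bookkeeping for the initial condition match, particularly tracking how the $q^{n(n-1)/2}$ prefactor and the values of the pairwise Bethe factor at coincident arguments $y_i = 1$ conspire to yield the products $(q^{-x_k} - q^{k-1})$ with the correct power of $q$ attached to each $k$; beyond this, the argument is routine.
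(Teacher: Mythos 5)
Your proposal follows essentially the same route as the paper: derive the ODE and initial data from Theorems \ref{thm.duality} and \ref{thm.initialdata}, then verify the integral formula via the standard template of \cite{BCS} (free evolution through the eigenfunction property, two-body boundary conditions via antisymmetry in $y_i,y_{i+1}$ against the Bethe factor, residue matching of the initial data, and uniqueness), with the half-stationary case reduced to the step case by translation invariance. The only difference is cosmetic --- you propose extracting the $t=0$ residues by direct expansion at $y_i=1$ where the paper deforms the contours to infinity and collects the poles at $y_i=qy_j$ --- and both treatments leave that bookkeeping at the same sketch level.
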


%

\begin{proof}
Theorems \ref{thm.duality} and \ref{thm.initialdata} immediately imply the evolution equations and initial data. It is easy to check that the integrals are the unique solutions to these equation. To solve $\frac{d}{dt}E(t;\vec{x}) = L^{n}_{\ell,r} E(t;\vec{x})$ it suffices (see \cite[Proposition 4.11]{BCS}) to find a function $u(t;\vec{x})$ with $\vec{x}\in \Z^n$ which satisfies (1) the {\it free evolution equation} $\frac{d}{dt}u(t;\vec{x}) = \sum_{i=1}^{n} \big[L^{1}_{\ell,r}\big]_i u(t;\vec{x})$ (where the subscript $i$ means that the operator acts only on $x_i$), (2) the {\it two-body boundary conditions} $(\ell \nabla_{i}^{-} - r \nabla_{i+1}^{+}) u(\vec{x}) =0$ for all $\vec{x}$ with $x_{i}=x_{i+1}+1$ and all $i\in \{1,\ldots, n-1\}$ (where $(\nabla^{\pm}f)((x) = f(x\pm 1)-f(x)$ and the subscript $i$ indicates to act on $x_i$), (3) the initial data $u(0;\vec{x}) = E(0;\vec{x})$ for $\vec{x}\in \mathcal{X}^n$. In fact, as explained in \cite[Proposition 4.11]{BCS}, one must really verify an exponential bound on $u(t;\vec{x})$ for all $t$ and point-wise convergence to the initial data as $t\to 0$. The verification that our integral formula satisfies these points is by this point quite standard and one can find an example of a detailed treatment in, for example, \cite[Theorem 4.13]{BCS}. Briefly, the free evolution equation is verified by combining
$$
\frac{d}{dt} \left(\frac{1-y}{1-qy}\right)^{x} \exp\left\{\frac{(1-q)^2 y}{(1-y)(1-q y)}\, t\right\} = L^1_{1,q} \left(\frac{1-y}{1-qy}\right)^{x} \exp\left\{\frac{(1-q)^2 y}{(1-y)(1-q y)}\, t\right\}
$$
with Leibnitz's rule. For any $i\in \{1,\ldots, n-1\}$, applying the boundary condition operator $\nabla_{i}^{-} - q \nabla_{i+1}^{+}$ when $x_i=x_{i+1}+1$ to the integrand results in a new integrand which is anti-symmetric in $y_i$ and $y_{i+1}$ (the denominator $(y_i-qy_{i+1})$ clears and the only asymmetric term left is $(y_i-y_{i+1})$). Since all integrals are on the same contours, this total integral is necessarily zero, verifying the boundary condition. For the initial data, if $x_1\geq 0$, there is no residue in $y_1$ at $1$ and the integral can be evaluated to zero. Otherwise, if all $x_i<0$, there are simple poles at $\infty$. One progressively deforms the contours there, but in so doing crosses residues from the denominator $(y_i-qy_j)$. This results in a residue expansion which is readily matched to the desired factor.
\end{proof}

\subsection{Stationary initial data and moment indeterminacy}\label{sec.stat}

We start by defining what we call stationary initial data. This is stationary in two senses. The first is that it is the trajectory of a stationary (up to parity) Markov chain in $x$. The second is that this measure on height functions is stationary for the dynamic ASEP time evolution.

\begin{definition}\label{def.stat}
We say that $\vec{s}$ is distributed as stationary initial data if $\big\{s_x\big\}_{x\in \Z}$ is defined as the trajectory of a Markov process with the same transition probabilities from $s_x$ to $s_{x-1}$ as the half-stationary initial data from Definition \ref{def.halfstat}, and with the one-point marginal distribution
$$
\PP(s_0 = 2n) = m_n,\qquad \textrm{where}\quad m_n = \frac{\alpha^{-2n} q^{n(2n-1)}(1+\alpha^{-1} q^{2n})}{(-\alpha^{-1},-q\alpha,q;q)_\infty}.
$$
\end{definition}
\begin{remarks}\label{rem.eqnfo}
It is not immediately obvious that $m_n$ defines a probability measure on $\Z$. This can be shown from the Jacobi triple product formula, and also follows from the following continuous $q^{-1}$-Hermite polynomial orthogonality statement which appears as \cite[Equation (4.1)]{CK}:
$$
\sum_{n\in \Z} m_n \,h_{a}\big(\f(n)/2\barspace q\big)h_{b}\big(\f(n)/2\barspace q\big) = \delta_{a,b} \,\frac{(q;q)_a}{q^{a(a+1)/2}}.
$$
Here $a,b \in \Z_{\geq 0}$. Taking $a=b=0$ so the $h_0\equiv 1$, yields that $m_n$ is a probability measure. The $h_a$ are orthogonal polynomials for this measure.
\end{remarks}

The next lemma shows that the stationary initial data is (up to parity) stationary (in terms of its one-point marginals) in space.
\begin{lemma}\label{lem.statx}
The Markov chain $s_x$ from Definition \ref{def.stat} has marginal distributions such that for $k\in \Z$,
$$
\PP(s_{2k} = 2n) = m_n,\qquad \textrm{and}\qquad \PP(s_{2k+1} = 2n+1) = \tilde{m}_n
$$
where $\tilde{m}_n$ is the same as $m_n$ but with $\alpha$ replaced with $\alpha/q$.
\end{lemma}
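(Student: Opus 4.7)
The plan is to verify the one-point marginal identities by a direct induction. Writing $K$ for the one-step backward transition operator from Definition \ref{def.halfstat} and $K^*$ for its adjoint acting on measures, it suffices to establish the pair of one-step propagation identities
$$
K^* m = \tilde m \qquad \text{and} \qquad K^*\tilde m = m,
$$
where $m=\{m_n\}_{n\in\Z}$ is viewed as a probability measure supported on the even integers $\{2n\}$ and $\tilde m=\{\tilde m_n\}_{n\in\Z}$ as a measure supported on the odd integers $\{2n+1\}$. The base case $\PP(s_0 = 2n) = m_n$ is the definition of the stationary initial data, and the parity alternation of $s_x$ is forced by $|s_{x+1} - s_x| = 1$. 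Hence the two propagation identities, combined with induction on $|k|$, yield the claimed marginals for all $k \leq 0$; for $k > 0$, the pair $(m,\tilde m)$ being invariant under one backward step lets us extend the process to a two-sided (parity-alternating) stationary Markov chain by the Kolmogorov consistency theorem, so the same marginals persist.

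For the first identity, the only thing to compute is
$$
(K^* m)(2n+1) = m_n \cdot \frac{q^{2n}}{\alpha + q^{2n}} + m_{n+1} \cdot \frac{\alpha}{\alpha + q^{2n+2}}.
$$
Substituting the explicit formula for $m_n$ and cancelling the factor $(1 + \alpha^{-1} q^{2n})$ in the first term (and $(1 + \alpha^{-1} q^{2n+2})$ in the second) against the transition-probability denominator, the sum collapses to
$$
\frac{\alpha^{-2n-2}\, q^{n(2n+1)}(\alpha + q^{2n+1})}{(-\alpha^{-1},-q\alpha,q;q)_\infty}.
$$
To match this against $\tilde m_n$ (which, by definition, is $m_n$ with $\alpha$ replaced by $\alpha/q$), one needs only the Pochhammer shift identity
$$
(-q\alpha^{-1}, -\alpha, q; q)_\infty = \alpha \cdot (-\alpha^{-1}, -q\alpha, q;q)_\infty,
$$
which is immediate from $(x;q)_\infty = (1-x)(qx;q)_\infty$ applied to the first two factors on the left. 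The verification of $K^* \tilde m = m$ is entirely parallel.

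The main obstacle is the bookkeeping of exponents and Pochhammer shifts; the manipulations are elementary but must be executed carefully to avoid sign or $q$-power errors. As a conceptual cross-check (and an alternative route to the same conclusion), one can verify the detailed balance relation for the two-step chain restricted to the evens,
$$
m_n \cdot [K^2](2n, 2n+2) = m_{n+1}\cdot [K^2](2n+2, 2n),
$$
which, after substituting the transition probabilities, reduces to the single ratio identity
$$
\frac{m_n}{m_{n+1}} = \frac{\alpha^2 (\alpha + q^{2n})}{q^{4n+1}(\alpha + q^{2n+2})},
$$
and is immediate from the formula for $m_n$. By the standard theory of reversible Markov chains this confirms that $m$ is invariant under $K^2$ on the even sublattice, and the analogous computation handles $\tilde m$ on the odd sublattice, thereby recovering the lemma from a slightly different direction.
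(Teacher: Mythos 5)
Your proposal is correct and follows essentially the same route as the paper: the paper's proof consists precisely of checking the two one-step propagation identities $\tilde m_n = \frac{q^{2n}}{\alpha+q^{2n}}m_n + \frac{\alpha}{\alpha+q^{2(n+1)}}m_{n+1}$ and $m_n = \frac{q^{2n-1}}{\alpha+q^{2n-1}}\tilde m_{n-1} + \frac{\alpha}{\alpha+q^{2n+1}}\tilde m_n$, which are exactly your identities $K^*m=\tilde m$ and $K^*\tilde m=m$, and your algebra (including the Pochhammer shift $(-q\alpha^{-1},-\alpha;q)_\infty = \alpha\,(-\alpha^{-1},-q\alpha;q)_\infty$) checks out. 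Your additional detailed-balance cross-check for $K^2$ is a valid consistency check, though on its own it would only give invariance on each parity sublattice and would still need $K^*m=\tilde m$ to identify the odd-site marginals.
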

\begin{proof}
This follows immediately from the readily checked fact that
$$
\tilde{m}_n = \frac{q^{2n}}{\alpha + q^{2n}} \, m_n + \frac{\alpha}{\alpha+q^{2(n+1)}}\, m_{n+1}\qquad \textrm{and}\qquad
m_n = \frac{q^{2n-1}}{\alpha + q^{2n-1}} \, \tilde{m}_{n-1} + \frac{\alpha}{\alpha+q^{2n+1}}\, \tilde{m}_{n}.
$$
\end{proof}

This initial data (as a height function) is also stationary with respect to the dynamic ASEP evolution.

\begin{theorem}\label{thm.stationary}
Stationary initial data (Definition \ref{def.stat}) is stationary for dynamic ASEP.
\end{theorem}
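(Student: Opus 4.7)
The plan is to establish stationarity via a site-wise detailed balance. Every transition of dynamic ASEP alters a single coordinate $s_x$ by $\pm 2$, and such a flip is admissible only when the local configuration $(s_{x-1},s_x,s_{x+1})$ is a valley $(y,y-1,y)$, which flips to the peak $(y,y+1,y)$, or the corresponding peak, which flips back to the valley. Writing the generator as $\mathcal{L}_{q,\alpha}=\sum_x \mathcal{L}_{q,\alpha;x}$ and testing the stationarity equation $\pi\mathcal{L}_{q,\alpha}(F)=0$ against local $F$, the contributions decouple over $x$ and the problem reduces to proving the local detailed balance
\begin{equation*}
\pi_{x-1,x,x+1}(y,y-1,y)\cdot r_+(y-1) \;=\; \pi_{x-1,x,x+1}(y,y+1,y)\cdot r_-(y+1)
\end{equation*}
for every $x$ and every $y\in\Z$ of the appropriate parity, where $r_+(s)=\frac{1+\alpha q^{-s}}{1+\alpha q^{-s-1}}$ and $r_-(s)=\frac{q(1+\alpha q^{-s})}{1+\alpha q^{-s+1}}$ are the dynamic ASEP rates for $s_x\mapsto s_x+2$ and $s_x\mapsto s_x-2$.

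To verify this identity, compute the three-site marginals using the spatial Markov structure of $\pi$:
\begin{equation*}
\pi_{x-1,x,x+1}(a,b,c) \;=\; P_{x+1}(c)\,p(b\mid c)\,p(a\mid b),
\end{equation*}
where $P_{x+1}$ is the marginal at $x+1$ (equal to $m_n$ or $\tilde m_n$ by Lemma \ref{lem.statx}) and $p(\cdot\mid\cdot)$ is the spatial transition kernel from Definition \ref{def.halfstat}. The common factor $P_{x+1}(y)$ cancels from the ratio of peak to valley, leaving
\begin{equation*}
\frac{\pi_{x-1,x,x+1}(y,y+1,y)}{\pi_{x-1,x,x+1}(y,y-1,y)}
\;=\;\frac{p(y+1\mid y)\,p(y\mid y+1)}{p(y-1\mid y)\,p(y\mid y-1)}
\;=\;\frac{q(\alpha+q^{y-1})}{\alpha+q^{y+1}}.
\end{equation*}
A short simplification of the rates gives $r_+(y-1)/r_-(y+1)=\frac{q(\alpha+q^{y-1})}{\alpha+q^{y+1}}$, matching the previous display and proving the local balance.

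The main delicate point is the reduction to this site-wise identity: one must check that non-admissible local configurations contribute zero to the generator test (they do, by inspection, since the relevant indicator vanishes on both sides of the would-be balance), and that the factorization of $\pi_{x-1,x,x+1}$ is consistent no matter which endpoint one anchors at, which is guaranteed by the one-step marginal relations of Lemma \ref{lem.statx}. Once the reduction is in place, the remaining algebra is the short verification given above. Note that the continuous $q^{-1}$-Hermite polynomial machinery (Remark \ref{rem.eqnfo}) plays no role in the detailed balance itself; it enters only to certify that $(m_n)_{n\in\Z}$ is a probability distribution, so that $\pi$ is a bona fide measure on $\mathcal{S}$.
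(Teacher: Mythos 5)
Your proposal is correct and follows essentially the same route as the paper: both verify stationarity by checking detailed balance for the single-site flips, computing the ratio of the peak ($\wedge$) to valley ($\vee$) probabilities under the spatial Markov measure and matching it to the ratio of the up/down jump rates, $\frac{q(\alpha+q^{y-1})}{\alpha+q^{y+1}}$. Your write-up is somewhat more explicit about the reduction to the site-wise identity and the three-site marginal factorization, but the key computation is identical.
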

\begin{proof}
This follows immediately by checking that the stationary initial data respects detailed balance for the jump rates of dynamic ASEP. In particular, for any $x$ and any $s$ (with the same parity as $x$) we calculate
\begin{equation}\label{eqnabv}
\frac{\textrm{rate}(s-1 \mapsto s+1)}{\textrm{rate}(s+1 \mapsto s-1)} = \frac{1+\alpha q^{-s+1}}{1+\alpha q^{-s}} \, \cdot \, \frac{1+\alpha q^{-s}}{q(1+\alpha q^{-s-1})} =  \frac{q^{-1}(1+\alpha q^{-s+1})}{1+\alpha q^{-s-1}}.
\end{equation}
Under the stationary measure, the probability of seeing the height function pattern $\wedge$ (starting and ending at height $s$) is $q^s(\alpha+q^s)^{-1} \alpha (\alpha+q^{s+1})$  while the probability of seeing the pattern $\vee$ (also starting and ending at height $s$) is $\alpha(\alpha+q^s)^{-1} q^{s-1}(\alpha+q^{s-1})^{-1}$. The ratio of these probabilities is equal to \eqref{eqnabv} and hence detailed balance is satisfied. This implies the stationarity of the initial data (cf. \cite[Proof of Theorem 2.1]{Liggett} for details on how detailed balance implies stationarity).
\end{proof}

\begin{remarks}\label{remonep}
As noted after \cite[Definition 9.2]{IRF}, the dynamic ASEP has a slightly simpler  description if one replaces $s_x$ by $\tilde{s}_x = s_x - \log_q \alpha$ (for simplicity, assume $\alpha>0$ here). The new height function $\tilde{s}$ now lies on a shift of the integer lattice, but maintains the $\pm 1$ slope line increments. The jump rates (assuming that they can be made) are now $\alpha$-independent, and $\tilde{s}_x\mapsto \tilde{s}_x -2$ with rate $\frac{q(1+q^{-\tilde{s}_x})}{1+q^{-\tilde{s}x+1}}$ and $\tilde{s}_x\mapsto \tilde{s}_x+2$ with rate $\frac{1+q^{-\tilde{s}_x}}{1+q^{-\tilde{s}_x-1}}$. Theorem \ref{thm.stationary} provides a one-parameter family of stationary initial data for this modified version of the dynamic ASEP. The parameter is the value of $\log_q\alpha$ and it is allowed to vary in the interval $[0,1)$.
\end{remarks}

\begin{remarks}
The one parameter family of stationary measures discussed in Remark \ref{remonep} are related (as we now explain) to the one-parameter family of measures which satisfy the indeterminant moment problem with respect to the orthogonality measure of the continuous $q^{-1}$-Hermite polynomials \cite{ISV}. Consider the dynamic ASEP with step initial data so that $s_x(0)=|x|$. Inspecting the jump rates, one expects that after a long time, above the origin the height function should be roughly around height $A$ with $q^A=\alpha$ (this is the height above which jumps down become more likely than jumps up, and below which the opposite occurs). It is, in fact, possible to compute certain expectations of this long time height distribution using our duality result. Theorem \ref{thm.duality} implies that $$\EE_{\vec{s}}\big[Z_{n;q,\alpha}(\vec{x};\vec{s}(t))\big] = \EE_{\vec{x}}\big[Z_{n;q,\alpha}(\vec{x}(t);\vec{s})\big]$$
where the first expectation is over the $\vec{s}(t)$ evolution from initial data $\vec{s}$ and the second is over the $\vec{x}(t)$ evolution (with left jump rate $1$ and right jump rate $q$) with initial data $\vec{x}$. We know, from Theorem \ref{thm.initialdata}, how to evaluate the right-hand side above when $\vec{s}$ is step initial data:
$$
(-\alpha^{-1};q)_n \EE_{\vec{s}_{\textrm{step}}}\big[Z_{n;q,\alpha}(\vec{x};\vec{s}(t))\big] = \EE_{\vec{x}} \Big[\prod_{k=1}^{n} \big(q^{-x_k\mathbf{1}_{x_k\leq 0}}-q^{k-1}\big)\Big].
$$
Now take $x_k\equiv 0$ and $t\to \infty$\footnote{This is actually not allowed, as our duality requires strict ordering of the $x_k$ so one cannot take all the $x_k$ equal. On the other hand, there is another route to prove \eqref{eqn.momfo}. The methods of \cite{IRF} show that the integral formulas in Corollary \ref{cor.int} hold true even when all $x_k$ are equal. Taking asymptotics as $t\to \infty$ of those formulas yields \eqref{eqn.momfo}.}.  We know that $x_k(t)$ will all tend to $-\infty$ (since the left jump rate exceeds the right one). Thus, the expectation simplifies as $q^{-x_k}\to 0$ and we get
\begin{equation}\label{eqn.momfo}
(-\alpha^{-1};q)_n \EE_{\vec{s}_{\textrm{step}}}\Big[\prod_{k=1}^n \big(1-\alpha^{-1} q^{2(k-1)} - q^{k-1}\alpha^{-1/2} \zeta(\alpha)\big) \Big] = (-1)^n q^{n(n-1)/2}
\end{equation}
where $\zeta(\alpha) = \alpha^{1/2} q^{-s_0(\infty)/2} - \alpha^{-1/2} q^{s_0(\infty)/2}$. Here we have assumed that $s_0(t)$ has a limiting distribution $s_0(\infty)$. The above relation can be used to extract the moments of $\zeta(\alpha)$. Note that $\zeta(\alpha)$ depends on $\alpha$ not just through the factors of $\alpha^{1/2}$ but also in that the dynamics which give rise to that limiting random variable are $\alpha$-dependent.

To relate the above discussion to the stationary measure, we should take $\alpha\to 0$ (otherwise, the long time height function will only differ from $|x|$ in a finite sized neighborhood of the origin). Doing this implies that the initial data $|x|$ around the origin rises to a very high level $A$. One may then hope that the properly centered height function will converge as $\alpha\to 0$ (in the vicinity of the origin) to a stationary height function. Using \eqref{eqn.momfo} to extract moments $\EE\big[ \big(\zeta(\alpha)\big)^k\big]$ for small $k$ and then taking $\alpha\to 0$ we were able to guess that the moments of the $\alpha\to 0$ limit of $\zeta$ (assuming it exists, call it $\zeta$ without any argument) satisfy
$$
\EE[\zeta^k] = \begin{cases} (1-q^{-1})^{k/2} \sum_{i=-k/2}^{k/2}{k \choose k/2+i} (-1)^i q^{-i(i-1)/2},&k\textrm{ is even};\\ 0,&k\textrm { is odd}.\end{cases}
$$
These are exactly the moments of the continuous $q^{-1}$-Hermite polynomials orthogonality measure \cite{ISV}. Having $q^{-1}$ (with $q\in (0,1)$) makes the moment problem indeterminate, but remarkably all solutions have been found in \cite{IM}. The extreme points of the convex set of solutions are distinguished by their support (they are all discrete). The weights of those are $m_n$ (from Definition \ref{def.stat}) at the points $x_n = \alpha^{1/2}q^{-n} - \alpha^{-1/2} q^n$ for $n\in \Z$. Thus, our stationary measures (after the change of variables in Remark \ref{remonep}) are exactly in correspondence with these extreme points.
\end{remarks}

\section{Proof of Theorem \ref{thm.duality}}\label{sec.thmproof}
This proof follows a similar method as in the proof of \cite[Theorem 4.2]{BCS}.

It is convenient to associate to $\vec{s}\in \mathcal{S}$ a particle configuration $\vec{\eta}=(\eta_{x+1/2})_{x\in \Z}$ where $\eta_{x+1/2} =\frac{1+ s_{x}-s_{x+1}}{2}\in \{0,1\}$ (we think of $1$ as representing particles or negatively sloping increments and $0$ as representing holes or positively sloping increments) as well as $\vec{N} = (N_x)_{x\in \Z}$ where $N_x = \frac{s_x-x}{2}$. Note that $N_{x+1} = N_x - \eta_{x+1/2}$. All Markov processes $\vec{s}(t)$, $\vec{\eta}(t)$ and $\vec{N}(t)$ should be considered as coupled by the above equalities. We can rewrite \eqref{eq.Zs} in terms of $\vec{N}$ variables. We will, in fact, need to work with the following generalization of that definition. For any subset $I\subseteq \{1,\ldots,n\}$ let
\begin{equation}\label{eq.ZN}
Z_{I;q,\alpha}(\vec{x};\vec{N}) = \prod_{k\in I} \Big(q^{-x_{k}} -\alpha^{-1}q^{2{k-1}} - q^{k-1}\big(q^{-N_{x_{k}}-x_{k}}-\alpha^{-1} q^{N_{x_{k}}}\big)\Big).
\end{equation}
(We will suppress the $q,\alpha$ subscript on $Z$ in what follows.) For any interval $I\subseteq \{1,\ldots, n\}$ let $L^{I}_{\ell,r}$ denote the generator of $|I|$-particle ASEP acting on the variables $\{x_i\}_{i\in I}$. In this way, $Z_{[1,n];q,\alpha} = Z_{n;q,\alpha}$ and $L^{[1,n]}_{\ell,r} = L^n_{\ell,r}$, where we use the abbreviation $[a,b]=\{a,a+1,\ldots, b\}$ for integers $a<b$.

The following lemma demonstrates the duality when $\vec{x}$ has a single cluster of neighboring particles.
\begin{lemma}\label{lem.subdual}
For any subset $[a,b]\subseteq [1,n]$ and $x\in \Z$, if $x_{i} = x+a+b-i$ for $a\leq i\leq b$, then
\begin{equation}\label{eq.markdualrest}
\mathcal{L}_{q,\alpha} Z_{[a,b];q,\alpha}(\vec{x};\vec{s}) = L^{[a,b]}_{1,q} Z_{[a,b];q,\alpha}(\vec{x};\vec{s}).
\end{equation}
\end{lemma}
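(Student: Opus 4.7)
My plan is to verify the identity by expanding both sides, using the product structure $Z_{[a,b];q,\alpha}(\vec{x};\vec{s})=\prod_{k=a}^{b} Y_k$ with
$$Y_k=q^{-x_k}-\alpha^{-1}q^{2(k-1)}-q^{k-1}\bigl(q^{(-s_{x_k}-x_k)/2}-\alpha^{-1}q^{(s_{x_k}-x_k)/2}\bigr)$$
as the central object, following the scheme of the analogous ASEP self-duality proof in \cite{BCS}. Each $Y_k$ depends only on the single pair $(x_k, s_{x_k})$, which localizes all bookkeeping.

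For the right-hand side, the cluster hypothesis $x_i=x+a+b-i$ means the particles occupy the consecutive sites $x+a,\ldots,x+b$, so exclusion blocks every interior particle. Within $[a,b]$ only $x_a$ can jump right (rate $q$, to $x+b+1$) and $x_b$ can jump left (rate $1$, to $x+a-1$). Since $x_k$ enters $Z_{[a,b];q,\alpha}$ only through $Y_k$, the RHS factors as
$$L^{[a,b]}_{1,q}Z_{[a,b];q,\alpha}(\vec{x};\vec{s}) = q\,(Y_a'-Y_a)\prod_{k=a+1}^{b}Y_k + (Y_b'-Y_b)\prod_{k=a}^{b-1}Y_k,$$
where $Y_a',Y_b'$ denote $Y_a,Y_b$ evaluated at the shifted coordinates $x_a+1$ and $x_b-1$ respectively.

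On the left-hand side, $\mathcal{L}_{q,\alpha}$ only affects $Z_{[a,b];q,\alpha}$ through height jumps at the cluster sites $x+a,\ldots,x+b$; a jump $s_y\mapsto s_y\pm 2$ is admissible iff $s_{y-1}=s_{y+1}=s_y\mp 1$. I would enumerate admissible jumps via the slope variables $\eta_{x+a-1/2},\ldots,\eta_{x+b+1/2}$ and for each one record the contribution $(\text{rate})\cdot (Y_k^\pm-Y_k)\cdot\prod_{j\ne k}Y_j$, where $Y_k^\pm$ is $Y_k$ after the shift of $s_{x_k}$. Because $Y_k$ depends on $s_{x_k}$ only through the single combination $q^{(-s_{x_k}-x_k)/2}-\alpha^{-1}q^{(s_{x_k}-x_k)/2}$, the differences $Y_k^\pm-Y_k$ factor cleanly and pair nicely with the dynamic rates $\tfrac{q(1+\alpha q^{-s})}{1+\alpha q^{-s+1}}$ and $\tfrac{1+\alpha q^{-s}}{1+\alpha q^{-s-1}}$. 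The crux is then a telescoping identity: at each interior cluster index $a<k<b$ the admissible up/down contributions combine to vanish (or to cancel against adjacent contributions), leaving residual terms at $k=a,b$ that coincide with the two-term RHS above after identifying the limiting rates $1$ and $q$ from Remark \ref{rem.alpha}.

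The main obstacle I anticipate is the case analysis for admissibility: at each of the $b-a+1$ cluster sites there are multiple admissible local slope patterns, and one must check the cancellation across all of them uniformly. The saving grace is that $\prod_{j\ne k}Y_j$ is independent of the jump at $x_k$, so at each $k$ the verification reduces to an algebraic identity between compact expressions in $q$, $\alpha$, and the local heights, tractable by direct computation. A secondary subtlety is that the admissibility at the endpoints $k=a,b$ depends on the extracluster heights $s_{x+a-1}$ and $s_{x+b+1}$, so one must check that the resulting dependence on these heights drops out after matching with the RHS (which is manifestly independent of them).
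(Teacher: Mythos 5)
Your strategy coincides with the paper's: exploit the product structure of $Z_{[a,b];q,\alpha}$, reduce the right-hand side to the two admissible boundary jumps of the cluster, write the left-hand side as a sum of height-flip contributions at the cluster sites, and cancel. The gap is that the cancellation \emph{is} the lemma, and you have only asserted it. The paper divides both sides by $Z_{[a,b];q,\alpha}$ so that each contribution becomes a ratio of single factors, and then explicitly evaluates the normalized height-flip contribution at each cluster site in all four local slope configurations $(\eta_{\cdot-1/2},\eta_{\cdot+1/2})\in\{0,1\}^2$. The outcome of that computation is the non-obvious structural fact that each site's contribution is a difference of two ``bond potentials,'' one attached to the bond on its right (depending only on $N$ there and on $\eta_{\cdot+1/2}$, with one of two functional forms according to that slope) and one attached to the bond on its left; consequently the sum over the cluster telescopes and the surviving boundary potentials are exactly the two ASEP ratio terms. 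This splitting is precisely where the specific form of the dynamic rates $\tfrac{q(1+\alpha q^{-s})}{1+\alpha q^{-s+1}}$ and $\tfrac{1+\alpha q^{-s}}{1+\alpha q^{-s-1}}$ enters, and without exhibiting it your ``crux'' is just a restatement of the claim. Note also that of the two mechanisms you offer, only the cross-site cancellation occurs; the interior contributions do not individually vanish.

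Two of your structural claims are also incorrect and would derail the verification as you have set it up. First, the right-hand side is \emph{not} manifestly independent of the extracluster heights: $Y_a'$ is evaluated at position $x+b+1$ and therefore involves $s_{x+b+1}$, and likewise $Y_b'$ involves $s_{x+a-1}$. The correct statement is that this dependence matches the dependence of the left-hand side's boundary terms on the admissibility indicators $\eta_{x+a-1/2}$ and $\eta_{x+b+1/2}$; this matching is part of the case check, not something that ``drops out.'' Second, the rates $1$ and $q$ on the right-hand side are simply the parameters of $L^{[a,b]}_{1,q}$ fixed in the statement of the lemma; Remark \ref{rem.alpha}, which concerns the $\alpha\to 0$ and $\alpha\to\infty$ degenerations of the dynamic ASEP, plays no role here.
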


\begin{proof}
Rewriting both sides of \eqref{eq.markdualrest} in terms of the actions of the generators and dividing through by $Z_{[a,b];q,\alpha}(\vec{s};\vec{x})$, we find that the desired equality is equivalent to showing that
\begin{equation}\label{eq.reduction}
\sum_{k=a-1}^{b-1} L(k,n) = A(a,n)+B(b,n)
\end{equation}
where
\begin{eqnarray*}
L(k,n) &=& \eta_{x+k-1/2}(1-\eta_{x+k+1/2})\,  \frac{1+\alpha q^{-s_{x+k}}}{1+\alpha q^{-s_{x+k} -1}}\,  \left(\frac{Z_{[a,b]}(\vec{s}_{x+k}^{+2};\vec{x})}{Z_{[a,b]}(\vec{s};\vec{x})}-1\right)\\
\nonumber && + (1-\eta_{x+k-1/2})\eta_{x+k+1/2}\, \frac{q(1+\alpha q^{-s_{x+k}})}{1+\alpha q^{-s_{x+k} +1}}\,  \left(\frac{Z_{[a,b]}(\vec{s}_{x+k}^{-2};\vec{x})}{Z_{[a,b]}(\vec{s};\vec{x})}-1\right),\\
A(a,n) &=& \frac{Z_{[a,b]}(\vec{s};\vec{x}^{-1}_{a})}{Z_{[a,b]}(\vec{s};\vec{x})}-1, \\
B(b,n) &=& q \left(\frac{Z_{[a,b]}(\vec{s};\vec{x}^{+1}_{b})}{Z_{[a,b]}(\vec{s};\vec{x})}-1\right);
\end{eqnarray*}
the notation $\vec{s}_{y}^{d}$ means to replace $s_{y}$ by $s_{y}+d$, and likewise $\vec{x}_{c}^d$ means to replace $x_{c}$ by $x_{c}+d$.

We can rewrite $L,A,B$ in terms of the $N$ and $\eta$ parameters, and by canceling common terms in the ratios of $Z$'s we arrive at the expressions
\begin{eqnarray*}
L(k,n) &=& \eta_{x+k-1/2}(1-\eta_{x+k+1/2})\, \frac{1+\alpha q^{-s_{x+k}}}{1+\alpha q^{-s_{x+k} -1}}\\ && \qquad \times \left(\frac{q^{-(x+k)}-\alpha^{-1} q^{2(n-k-1)} - q^{n-k-1}(q^{-1-N_{x+k}-(x+k)}-\alpha^{-1} q^{1+N_{x+k}})}{q^{-(x+k)}-\alpha^{-1} q^{2(n-k-1)} - q^{n-k-1}(q^{-N_{x+k}-(x+k)}-\alpha^{-1} q^{N_{x+k}})}-1\right)\\
\nonumber && + (1-\eta_{x+k-1/2})\eta_{x+k+1/2}\, \frac{q(1+\alpha q^{-s_{x+k}})}{1+\alpha q^{-s_{x+k} +1}}\\&&\qquad \times \left(\frac{q^{-(x+k)}-\alpha^{-1} q^{2(n-k-1)} - q^{n-k-1}(q^{1-N_{x+k}-(x+k)}-\alpha^{-1} q^{-1+N_{x+k}})}{q^{-(x+k)}-\alpha^{-1} q^{2(n-k-1)} - q^{n-k-1}( q^{-1-N_{x+k}-(x+k)}-\alpha^{-1} q^{1+N_{x+k}})}-1\right),\\
A(a,n) &=& \frac{q^{-(x+a-2)}-\alpha^{-1} q^{2(n-a)} - q^{n-a}(q^{-1-N_{x+a-2}-(x+a-2)}-\alpha^{-1} q^{1+N_{x+a-2}})}{q^{-(x+a-1)}-\alpha^{-1} q^{2(n-a)} - q^{n-a}(q^{-1-N_{x+a-1}-(x+a-1)}-\alpha^{-1} q^{1+N_{x+a-1}})} -1, \\
B(b,n) &=& q\left(\frac{q^{-(x+b)}-\alpha^{-1} q^{2(n-b)} - q^{n-b}( q^{-1-N_{x+b}-(x+b)}-\alpha^{-1} q^{1+N_{x+b}})}{q^{-(x+b-1)}-\alpha^{-1} q^{2(n-b)} - q^{n-b}( q^{-1-N_{x+b-1}-(x+b-1)}-\alpha^{-1} q^{1+N_{x+b-1}})}-1\right).
\end{eqnarray*}

One readily confirms the following formulas
\begin{equation*}
L(m,n)  = \begin{cases}
\dfrac{(1-q)q\alpha}{q^{n+x+N_{x+m+1}}+q\alpha} - \dfrac{(1-q)q\alpha}{q^{n+x+N_{x+m}}+q\alpha} & (\eta_{x+m-1/2},\eta_{x+m+1/2})=(0,0),\\ \\
\dfrac{(1-q)q^{m+2+N_{x+m+1}}}{q^{m+2+N_{x+m+1}}-q^n} - \dfrac{(1-q)q\alpha}{q^{n+x+N_{x+m}}+q\alpha} & (\eta_{x+m-1/2},\eta_{x+m+1/2})=(0,1),\\ \\
\dfrac{(1-q)q\alpha}{q^{n+x+N_{x+m+1}}+q\alpha} - \dfrac{(1-q)q^{m+1+N_{x+m}}}{q^{m+1+N_{x+m}}-q^n} & (\eta_{x+m-1/2},\eta_{x+m+1/2})=(1,0),\\ \\
\dfrac{(1-q)q^{m+2+N_{x+m+1}}}{q^{m+2+N_{x+m+1}}-q^n} - \dfrac{(1-q)q^{m+1+N_{x+m}}}{q^{m+1+N_{x+m}}-q^n} & (\eta_{x+m-1/2},\eta_{x+m+1/2})=(1,1). 
\end{cases}
\end{equation*}
Similarly, we see that
\begin{equation*}
A(a,n)  = \begin{cases}
-\dfrac{(1-q)q\alpha}{q^{n+x+N_{x+a-1}}+q\alpha} & \eta_{x+a-3/2}=0,\\ \\
-\dfrac{(1-q)q^{a+N_{x+a-1}}}{q^{a+N_{x+a-1}}-q^n} & \eta_{x+a-3/2}=1.
\end{cases}
\end{equation*}
and
\begin{equation*}
B(b,n)  = \begin{cases}
\dfrac{(1-q)q\alpha}{q^{n+x+N_{x+b}}+q\alpha} & \eta_{x+b-1/2}=0,\\ \\
\dfrac{(1-q)q^{b+1+N_{x+b}}}{q^{b+1+N_{x+b}}-q^n}& \eta_{x+b-1/2}=1.
\end{cases}
\end{equation*}

From these evaluation formulas, \eqref{eq.reduction} follows by telescoping of the $L(m,n)$ summands. Alternatively, this can be seen by induction in $b$. When $b=a$, by inspection $L(a-1,n)= A(a,n)+B(a,n)$. The inductive step follows since, by inspection, one sees that $L(b,n)+B(b,n) = B(b+1,n)$.
\end{proof}

We now return to proving \eqref{eq.markdual}. Any $x\in \mathcal{X}^n$ can be decomposed into clusters of consecutive particles with each cluster separated by a positive number of holes (e.g. $\vec{x} = (5,4,3,1,0,-4,-5)$ has three clusters, the first $(5,4,3)$, the second $(1,0)$ and the third $(-4,-5)$). Let $c$ be the number of clusters in $\vec{x}$ and $I_i$ be the set of labels of the $i^{th}$ cluster for $1\leq i \leq c$ (e.g., $c=3$ in the above example and $I_1 = \{1,2,3\}$, $I_2=\{4,5\}$ and $I_3=\{6,7\}$). Then, letting $L^{I_i}_{1,q}$ act on the variables with labels in $I_i$ as $L^{|I_i|}_{1,q}$ we have the following string of equalities (we again suppress $q,\alpha$ in the $Z$ notation as they are fixed)
\begin{eqnarray*}
L^{n}_{1,q} Z_{[1,n]}(\vec{x};\vec{s})  &=& \sum_{i=1}^{c} L^{I_i}_{1,q} Z_{[1,n]}(\vec{x};\vec{s}) = \sum_{i=1}^{c} Z_{[1,n]\setminus I_i}(\vec{x};\vec{s}) L^{I_i}_{1,q} Z_{I_i}(\vec{x};\vec{s})\\
&=& \sum_{i=1}^{c} Z_{[1,n]\setminus I_i}(\vec{x};\vec{s}) \mathcal{L}_{q,\alpha} Z_{I_i}(\vec{x};\vec{s}) = \mathcal{L}_{q,\alpha} Z_{[1,n]}(\vec{x};\vec{s}).
\end{eqnarray*}
The first equality comes from the fact that for ASEP, disjoint clusters do not interact instantaneously. The second equality follows because $L^{I_i}_{1,q}$ only acts in the variables $\{x_j\}_{j\in I_i}$ and hence we can factor out the multiplicative terms not involving these variables. The third equality follows from applying Lemma \ref{lem.subdual}. The final equality follows because the jumps of $\vec{s}$ which affect the value of $Z_{I_i}(\vec{x};\vec{s})$ do not affect the value of $Z_{[1,n]\setminus I_i}(\vec{x};\vec{s})$. This completes the proof of \eqref{eq.markdual}. The second claimed result \eqref{eq.evoleq} follows immediately from \eqref{eq.markdual}. The proof of Theorem \ref{thm.duality} is complete.

\section{Proof of Theorem \ref{thm.initialdata}, and Lemmas \ref{lem.eign} and \ref{sumid}}\label{sec.half}

We first prove Theorem \ref{thm.initialdata}, assuming the validity of  Lemmas \ref{lem.eign} and \ref{sumid}. Then, we prove those lemmas.

\begin{proof}[Proof of Theorem \ref{thm.initialdata}]

Theorem \ref{thm.initialdata} made two claims -- equations \eqref{eq.alphaindstep} and \eqref{eq.alphaindhalf}. The first of these equations was proved immediately after the statement of the theorem, so this proof is concerned only with proving \eqref{eq.alphaindhalf}. We will prove the more general claim with weakly ordered $x_i$'s: $x_1\geq \cdots \geq x_n$.

\smallskip
\noindent {\it Case when $q\to 1$:} As it is informative for the general $q$ proof, let us first consider the proof of \eqref{eq.alphaindhalf} when $q\to 1$. In that case, the half-stationary initial data from Definition \ref{def.halfstat} has transition probabilities so that $s_{x-1}=s_x+1$ with probability $(\alpha+1)^{-1}$ and $s_{x-1}=s_x-1$ with probability $\alpha(\alpha+1)^{-1}$. The desired relation  \eqref{eq.alphaindhalf} reduces to
\begin{equation}\label{eq.qoneres}
\frac{(1+\alpha)^n}{2^n}\EE \Big[ \prod_{k=1}^{n}(2(k-1)-s_{x_{k}}+x_{k})\Big] = \prod_{k=1}^{n} (x_{k}+k-2).
\end{equation}

Making the change of variables $t_k = 1-x_k$ for $k=1,\ldots,n$ and setting $y(t) = (s_{1-t}+t-1)/2$ we find that for $t\leq 0$, $y(t)=0$ and for $t\geq 1$, $y(t)= z_1+\cdots +z_t$ where the $z_i$ are independent identically distributed Bernoulli random variables which equal $1$ with probability $(\alpha+1)^{-1}$ and 0 with probability $\alpha(\alpha+1)^{-1}$. In these variables, showing \eqref{eq.qoneres} is equivalent to showing that for all integers $t_1\leq \cdots \leq t_n$,
\begin{equation}\label{eq.qoneresred}
(1+\alpha)^n \EE\Big[\prod_{k=1}^{n} \big(y(t_{k})-k+1\big)\Big] = \prod_{k=1}^{n} (t_{k}-k+1).
\end{equation}

Observe that the right-hand side of \eqref{eq.qoneresred} is a multinomial in the variables $\{t_1,\ldots, t_n\}$ with maximal degree 1 in each $t_i$ term. The multinomial vanishes when $t_{k}=k-1$ for some $k\in \{1,\ldots, n\}$ and has coefficient 1 in front of the maximal total degree term $t_1\cdots t_n$. This is a complete characterization of the multinomial, hence it suffices to show that the left-hand side of \eqref{eq.qoneresred} likewise satisfies these properties.

Let us first demonstrate that the left-hand side of \eqref{eq.qoneresred} is a multinomial in the variables $\{t_1,\ldots, t_n\}$ with maximal degree 1 in each $t_i$ term. To do so, it suffices to show the same for $\EE\big[\prod_{k\in I} y(t_{k})\big]$ for any $I\subseteq \{1,\ldots, n\}$. Since $y(t) = z_1+\cdots +z_t$ we can reduce the computation of that expectation to the sum of expectations of $\prod_{k\in I} z_{s_k}$ where $s_k$ ranges over $\{1,\ldots, t_k\}$. The expectation of such a product is determined by the number $d$ of distinct $s_k$ for $k\in I$ and the resulting expectation equals $(\alpha+1)^{-d}$. Given a collection of sets $\{1,\ldots, t_k\}$ for $k\in I$, the number of ways of picking one element $s_k$ from each set so as to have $d$ district elements in $\{s_k\}_{k\in I}$ is easily seen to be a multinomial in variables $\{t_k\}_{k\in I}$ of maximal degree 1 in each variable.  This implies the desired multinomiality of the left-hand side of \eqref{eq.qoneresred}. Moreover, it shows that the maximal degree term $t_1\cdots t_n$ only arises when $I= \{1,\ldots,n\}$ and $d=n$. In that case, it is a easy to see that the $t_1\cdots t_n$-coefficient in $\EE\big[\prod_{k=1}^n y(t_{k})\big]$ is $(\alpha+1)^{-n}$. This cancels the factor $(1+\alpha)^n$ in front of the expectation in the left-hand side of \eqref{eq.qoneresred} and gives the desired maximal coefficient of 1.

\begin{figure}
\begin{center}
\includegraphics[scale=1.2]{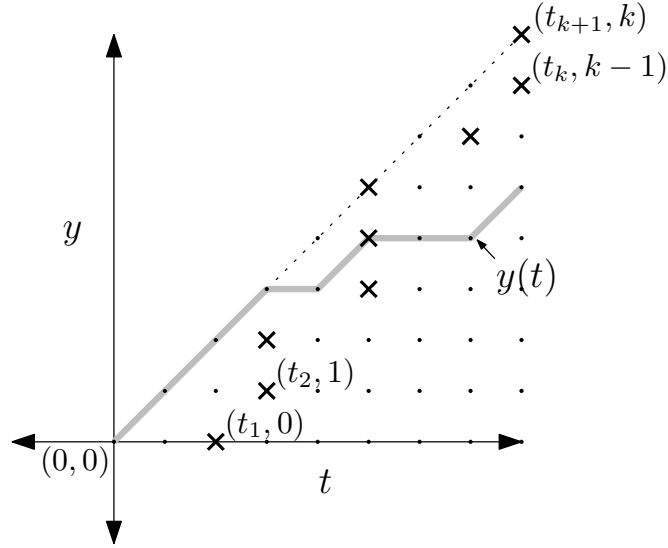}
\end{center}
\caption{Proof that the left-hand side of \eqref{eq.qoneresred} vanishes when $t_{k+1}=k$.}\label{fig.tygraph}
\end{figure}

It remains to demonstrate that the left-hand side of \eqref{eq.qoneresred} vanishes when $t_{k+1}=k$ for some $k\in \{0,\ldots, n-1\}$. This fact is illustrated in Figure \ref{fig.tygraph}. Assume that $t_{k+1}=k$. On $(t,y)$-axes we label with $\mathbf{\times}$ points $(t_{\ell+1},\ell)$ for $0\leq \ell\leq k$. In bold grey we plot the trajectory $\big(t,y(t)\big)$ for $0\leq t\leq k$. Notice that since $t_1\leq \cdots \leq t_{k+1}$, the grey trajectory (which starts at $(0,0)$) must necessarily hit at least one $\mathbf{\times}$ labeled point. That occurrence, however, implies that $y(t_{\ell+1}) - \ell=0$, and hence the product $\prod_{k=0}^{n-1} \big(y(t_{k+1})-k\big)$ for that trajectory $y(\cdot)$ will be zero. Since this holds for all trajectories $y(\cdot)$ on which $\EE$ is supported, the expectation must likewise vanish. This argument completes the proof \eqref{eq.qoneresred} and \eqref{eq.qoneres} -- the $q\to 1$ limit of our desired result  \eqref{eq.alphaindhalf}.

\smallskip
\noindent {\it General $q$ case:} We return now to prove  \eqref{eq.alphaindhalf} for general $q$. For convenience, let us restate the desired result below
\begin{equation}\label{eq.alphaindhalfrestate}
\frac{\alpha^n}{q^{n(n-1)/2}} \, \EE\big[Z_{n;q,\alpha}(\vec{x};\vec{s})\big] = \prod_{k=1}^{n}\big(q^{(1-x_{k})\mathbf{1}_{x_{k}\leq 1 }}-q^{k-1}\big).
\end{equation}
If $x_1\geq 1$ then the $k=1$ term
in the product \eqref{eq.Zs} defining $Z_{n;q,\alpha}$ evaluates to zero (recall that for $x\geq 1$,  $s_{x}=x$). Thus, when $x_1\geq 1$, we find that \eqref{eq.alphaindhalfrestate} holds (the left-hand side is zero as we just observed, and the right-hand side is clearly zero as well).
Owing to the ordering of $\vec{x}$, it now remains to show that \eqref{eq.alphaindhalfrestate} holds under the assumption that all $x_i\leq 0$. In this case, the right-hand side of \eqref{eq.alphaindhalfrestate} simplifies to
$$
\prod_{k=1}^{n} \big(q^{(1-x_{k})}-q^{k-1}\big).
$$

Making the change of variables $t_k = 1-x_k$ for $k=1,\ldots,n$, $S(t) = s_{1-t}$ for $t\geq 0$, and setting
$$
G(t,k):=q^{t-1} -q^{2k}\alpha^{-1} - q^k q^{(t-1)/2}\alpha^{-1/2} \f(S(t)) = q^{t-1} (1-q^{\frac{-S(t)+1-t}{2} +k}) (1+ q^{\frac{S(t)+1-t}{2}+k}\alpha^{-1}),
$$
(recall from \eqref{eq.f} that $\f(s) = q^{-s/2} \alpha^{1/2} - q^{s/2}\alpha^{-1/2}$)
the proof of \eqref{eq.alphaindhalfrestate} reduces to showing that for $0\leq t_1\leq \cdots \leq t_n$,
\begin{equation}\label{eq.lefttoshow}
\frac{\alpha^n}{q^{n(n-1)/2}} \EE\Big[\prod_{k=1}^{n} G(t_{k},k-1)\Big] = \prod_{k=1}^{n} (q^{t_{k}}-q^{k-1}).
\end{equation}

We proceed similarly to the $q\to 1$ case. The right-hand side of \eqref{eq.lefttoshow} is a multinomial in the variables  $\{q^{t_1},\ldots, q^{t_n}\}$ with maximal degree 1 in each $q^{t_i}$ term. The multinomial vanishes when $q^{t_{k+1}}=q^k$ and has coefficient $1$ in front of the maximal total degree term $q^{t_{1}}\cdots q^{t_n}$. This is a complete characterization of the multinomial, hence it suffices to show that the left-hand side of \eqref{eq.lefttoshow} likewise satisfies these properties. If (as in the $q\to 1$ case) we set  $y(t) = (S(t)+t-1)/2$ then $G(t,k)$ contains a factor $(1-q^{-y(t) +k})$, or equivalently (up to a multiplicative factor) $q^{y(t)}-q^k$. This is like the factor $y(t)-k$ in the $q\to 1$ case, and the same argument shows that if $q^{t_{k}}=q^{k-1}$ (or equivalently $t_{k}=k-1$) for some $k\in \{1,\ldots, n\}$ then the expectation on the left-hand side of \eqref{eq.lefttoshow} will be zero.

It remains to prove the multinomiality and highest degree coefficient for the left-hand side. The expectation on the left-hand side of \eqref{eq.lefttoshow} can be expanded as
\begin{equation}\label{eq.binth}
\EE\Big[\prod_{k=1}^{n} G(t_{k},k-1)\Big]  = \sum_{I\subseteq \{1,\ldots ,n\}} \EE\Big[\prod_{k\in I} \big(- q^{k-1} q^{(t_k-1)/2}\alpha^{-1/2} \f(S(t_k))\big)\Big] \, \prod_{k\in I^c} \big(q^{t_k-1} -q^{2(k-1)}\alpha^{-1} \big).
\end{equation}
The expectation in the right-hand side above boils down to computing $\EE\Big[\prod_{k\in I} \f\big(S(t_k)\big)\Big]$.

This is where Lemmas \ref{lem.eign} and \ref{sumid} come into play. In particular, observe that using the three term recurrence relation \eqref{eq.threeterm} for continuous $q^{-1}$-Hermite polynomials, along with Lemma \ref{lem.eign} we find that (recall also Definition \ref{def.halfstat}) for any $\ell\geq 0$ and $t\geq t'\geq 0$
\begin{equation}\label{eq.keyit}
\EE\Big[\f(S(t)) h_{\ell}\big(\f(S(t))/2 \barspace q\big) \big\vert S(t')\Big] = q^{(t-t')\frac{\ell+1}{2}} h_{\ell+1}\big(\f(S(t'))/2\barspace q\big) + (q^{-\ell}-1)q^{(t-t')\frac{\ell-1}{2}} h_{\ell-1}\big(\f(S(t'))/2\barspace q\big).
\end{equation}
Recalling that $h_0(x\barspace q) =1$, the above relation (along with sequential evaluation of conditional expectations) provides an immediate route to computing $\EE\Big[\prod_{k\in I} \f\big(S(t_k)\big)\Big]$. What is important to us is not the exact computation, but rather the readily confirmed (by induction on $|I|$) fact that $\EE\Big[\prod_{k\in I} q^{t_k/2}\f\big(S(t_k)\big)\Big]$ is a multinomial in $\{q^{t_k}\}_{k\in I}$ variables, with degree at most 1 in each variable. Indeed, this follows from the fact that \eqref{eq.keyit} only shifts the index $\ell$ up or down by exactly 1. This observation, combined with \eqref{eq.binth} proves the multinomiality of the left-hand side of \eqref{eq.lefttoshow} and that its maximal degree is 1 in each variable $\{q^{t_1},\ldots q^{t_n}\}$.

Again, using \eqref{eq.binth} along with \eqref{eq.keyit} one sees that the coefficient of the $q^{t_1}\cdots q^{t_{n}}$ term in the right-hand side of \eqref{eq.lefttoshow} is
$$
\frac{\alpha^n}{q^{n(n-1)/2}}\sum_{I\subseteq \{1,\ldots ,n\}} \prod_{k\in I} \big(-q^{k-1}(q\alpha)^{-1/2}\big) h_{|I|}\big(\f(1)/2\big) \prod_{k\in I^c} (q^{-1}).
$$
This term comes from always choosing the first term on the right-hand side of \eqref{eq.keyit} (otherwise certain $q^{t_i}$ variables will be missing).
Letting $j=|I|$, we can rewrite the above as
\begin{equation}\label{eq.almost}
\frac{\alpha^n}{q^{n(n-1)/2}}\sum_{j=0}^{n} (-1)^j (q\alpha)^{-j/2} h_{j}\big(\f(1)/2\big) q^{j-n} \sum_{\substack{I\subseteq \{1,\ldots ,n\}\\ |I|=j}} \prod_{k\in I} q^{k-1}.
\end{equation}
Recalling that
$$
\sum_{\substack{I\subseteq \{1,\ldots ,n\}\\ |I|=j}} \prod_{k\in I} q^{k-1} = q^{j(j-1)/2} {n \choose j}_q
$$
we find that \eqref{eq.almost} is exactly the same as the left-hand side of \eqref{eq.sumid}, which by Lemma \ref{sumid} is equal to 1. This proves that the highest degree coefficient is 1 and hence shows \eqref{eq.lefttoshow} and completes the proof of \eqref{eq.alphaindhalf}.
\end{proof}

\begin{proof}[Proof of Lemma \ref{lem.eign}]
Recall the operator $K$ from \eqref{eq.keq} which we reproduce here:
$$
(Kg)(s) = \frac{q^{s}}{\alpha + q^{s}} g(s+1) + \frac{\alpha}{\alpha + q^{s}} g(s-1).
$$
This operator acts on function $g:\Z\to \R$. However, one sees that it really takes functions defined on the even integers and maps them to functions on the odds, and {\it vice versa}. Define $\Z^e=2\Z$ and $\Z^o=2\Z+1$ to be the even and odd integers. Then let
$K^{e\leftarrow o}$ be the restriction of $K$ acting on function $g:\Z^o\to \R$ and returning functions $K^{e\leftarrow o}g:\Z^e\to \R$. Likewise, define $K^{o\leftarrow e}$ which acts from even integers functions to odd integers function. Let
$$
L^e = K^{e\leftarrow o}K^{o\leftarrow e},\qquad L^o = K^{o\leftarrow e}K^{e\leftarrow o},
$$
and observe that $L^e$ acts on function $g:\Z^e\to \R$ and returns the same type of function (and {\it vice versa} for $L^o$ but with $\Z^o$). $L^e$ and $L^o$ can be considered as restrictions of the operator $L$ which acts on functions $g:\Z\to \R$ as
\begin{eqnarray*}
(Lg)(s) &=& \frac{\alpha}{\alpha+q^s}\frac{\alpha}{\alpha+q^{s-1}} g(s-2) + \Big(1- \frac{\alpha}{\alpha+q^s}\frac{\alpha}{\alpha+q^{s-1}} - \frac{q^s}{\alpha+q^s} \frac{q^{s+1}}{\alpha+q^{s+1}} \Big) g(s)\\ &&+ \frac{q^s}{\alpha+q^s} \frac{q^{s+1}}{\alpha+q^{s+1}}g(s+2).
\end{eqnarray*}

Define $\psi_{n}(s):\Z\to \R$ by
$$
\psi_{n}(s) = h_n\big(\f(s)/2\barspace q\big)
$$
(where $\f(s) = q^{-s/2} \alpha^{1/2} - q^{s/2}\alpha^{-1/2}$ as in \eqref{eq.f}) and let $\psi^{e}_n$ and $\psi^{o}_n$ be the restrictions of $\psi_n$ to $s\in \Z^e$ and $\Z^o$ respectively. It follows from \cite[Equation (2.8)]{CK} that
\begin{equation}\label{eq.eignL}
(L\psi_n)(s) = q^n \psi_n(s), \qquad \textrm{or equivalently } \qquad (L^e\psi^e_n)(s) = q^n \psi^e_n(s) \quad \textrm{and} \quad (L^o\psi^o_n)(s) = q^n \psi^o_n(s).
\end{equation}
To get the above eigenrelation from \cite[Equation (2.8)]{CK}, first take $\beta=0$. Then take $e^{-y} = \tilde\alpha q^\ell$ as in \cite[Section 3]{CK}. (In fact, in \cite{CK} they use $\alpha$ where we have just used $\tilde\alpha$. We used $\alpha$ for a different purpose, so we replace their $\alpha$ with $\tilde\alpha$.) Letting $\ell=\tfrac{1}{2}s$ and $\tilde\alpha = \alpha^{-1/2}$ yields the relation for $L^e$, and letting $\ell=\tfrac{1}{2}s+\tfrac{1}{2}$ and $\tilde\alpha = q^{-1/2}\alpha^{-1/2}$ yields the relation for $L^o$.

Now define $\psi^{o\leftarrow e}_n(s) = \big(K^{o\leftarrow e}\psi^e_n\big)(s)$ for $s\in \Z^o$ and $\psi^{e\leftarrow o}_n(s) = \big(K^{e\leftarrow o}\psi^o_n\big)(s)$ for $s\in \Z^e$.
Lemma \ref{lem.eign} is equivalent to the claim that
\begin{equation}\label{eq.boils}
\psi^{o\leftarrow e}_n(s) = c^o_n \psi^o_n(s),\qquad \psi^{e\leftarrow o}_n(s) = c^e_n \psi^e_n(s),\qquad \textrm{where} \quad c^o_n=c^e_n = q^{n/2}.
\end{equation}
We will prove \eqref{eq.boils} in two steps. First we will establish that $\psi^{o\leftarrow e}_n(s) = c^o_n \psi^o_n(s)$ for some constant $c^o_n$ (and likewise for the other term), then we will compute the constant.

Observe that by the eigenrelation \eqref{eq.eignL},
$$
\big(K^{o\leftarrow e} \psi^{e\leftarrow o}_n\big)(s)  = \big(L^o \psi^o_n\big)(s) = q^n \psi^o_n(s)
$$
Thus, applying $K^{e\leftarrow o}$ to the above, we find that
\begin{equation}\label{eq.learrow}
\big(L^e \psi^{e\leftarrow o}_n\big)(s) = q^n \psi^{e\leftarrow o}_n(s).
\end{equation}

In what follows we will work only with the even case -- the odd case follows verbatim replacing $e$ by $o$. Define $m(s) = \alpha q^{-s(s-1)/4} (\alpha+q^s)^{-1/2}$ and an associated weighted inner product $(f,g)_{\Z^e;m} = \sum_{s\in \Z^e} f(s) g(s) \big(m(s)\big)^{-2}$. The set of all $f$ with $(f,f)_{\Z^e;m}<\infty$ defines a weighted $L^2$ space. We claim that the only functions $\psi:\Z^e\to \R$ in this weighted $L^2$ space for which $\big(L^e \psi\big)(s)=q^n \psi(s)$ are constants multiple of $\psi^e_n$. To prove this we follow the approach of \cite[Section 3]{CK} and conjugate $L$ to be self-adjoint. Define a diagonal multiplication operator $M$ which takes $g(s)$ to $m(s)g(s)$. Consequently, $L^{e,M}=M^{-1} L^e M$ is a symmetric operator with respect to the standard (unweighted) pairing $(f,g)_{\Z^e} = \sum_{s\in \Z^e} f(s)g(s)$. Letting $\psi^{e,M}_n(s) = M^{-1} \psi^e_n$, we have $\big(L^{e,M}\psi^{e,M}_n\big)(s) = q^n \psi^{e,M}_n(s)$. This, together with the self-adjointness and the fact that all eigenvalues are distinct implies that $\big(\psi^{e,M}_n,\psi^{e,M}_m\big)_{\Z^e} = \delta_{m,n} a_n$ for some constant $a_n$. In fact, this orthogonality statement is already given in \cite[Equation (4.1)]{CK} (see also Remark \ref{rem.eqnfo}). Moreover, \cite[Theorem 3.9]{CK} shows that the $\big\{\psi^{e,M}_n\big\}_{n\geq 0}$ form a complete orthogonal basis of the $L^2$ space defined with respect to the above inner product. That theorem is stated for the weighted inner product above and works directly with the $\psi^{e}_n$. Note also, that to translate that result one must use the same substitutions described earlier in this proof.



Now consider $\psi$ such that $\big(L^e \psi\big)(s)=q^n \psi(s)$ and such that $\psi$ is in the weighted $L^2$ space defined in the proceeding paragraph. Then letting $\psi^{M} = M^{-1}\psi$ we find that
$$
q^n \big(\psi^M,\psi^{e,M}_m\big)_{\Z^e} = \big(L^{e,M} \psi^M,\psi^{e,M}_m\big)_{\Z^e} = \big(\psi^M,L^{e,M}\psi^{e,M}_m\big)_{\Z^e} = q^m \big(\psi^M,\psi^{e,M}_m\big)_{\Z^e}
$$
which shows that $\big(\psi^M,\psi^{e,M}_m\big)_{\Z^e}$ is zero unless $n=m$. By assumption, the function $\psi^{M}$ has finite $L^2$ norm (with respect to the standard inner product). Thus, by the completeness and orthogonality of the $\psi^{e,M}_n$, this implies the desired relation that $\psi^{M}$ must be a finite constant times $\psi^{e,M}_n$ (and the same statement holds with the $M$ terms removed).
Hence, from \eqref{eq.learrow} and the above deduction it follows that $\psi^{e\leftarrow o}_n(s) = c^e_n \psi^e_n(s)$ for some finite constant $c^e_n$ (and likewise with $e$ and $o$ switched).

It remains to determine the constant $c^e_n$ and $c^o_n$. These are determined directly by comparing coefficients. Note that $h_n(x\barspace q)$ has top degree $x^n$ coefficient $2^n$ (as follows readily from the recursion \eqref{eq.threeterm} defining it). Thus, $\psi^e_n(s)$, expressed as a polynomial in $q^{s/2}$, has top degree $(q^{s/2})^n$ coefficient equal to $(-\alpha^{-1})^n$. Using this, we can compute the top degree coefficient in $q^{s/2}$ of $(\alpha+ q^s) \psi^{e\leftarrow o }_n(s)$ in two ways. The first uses $\psi^{e\leftarrow o}_n(s) = c^e_n \psi^e_n(s)$ to deduce that the coefficient of $(q^{s/2})^{n+2}$ (which is the top degree) is $c^e_n (-\alpha^{-1})^n$. The other way uses the definition
$$
(\alpha+ q^s) \psi^{e\leftarrow o}_n(s) = \alpha \psi^o_n(s) + q^s \psi^o_n(s+1)
$$
to show that the coefficient of $(q^{s/2})^{n+2}$ is $q^{n/2} (-\alpha^{-1})^n$ (this comes from the $q^s$ term on the right-hand side above). Matching coefficients gives $c^e_n = q^{n/2}$. Similarly we determine $c^o_n=q^{n/2}$. This shows \eqref{eq.boils} and hence proves the lemma.

\end{proof}

\begin{proof}[Proof of Lemma \ref{sumid}]
We prove Lemma \ref{sumid} for $q>1$ below. The result for $q<1$ follows immediately from analytic continuation in $q$.

We rely on a known identity for the continuous $q$-Hermite polynomials $\{H_n(x\barspace q)\}_{n\geq 0}$ which are defined through an analogous recursion relation to $h_n(x\barspace q)$ as
$$
2x H_n(x\barspace q) = H_{n+1}(x\barspace q) + (1-q^{n}) H_{n-1}(x\barspace q), \quad \textrm{with} \quad H_{-1}(x\barspace q)=0,\quad \textrm{and}\quad H_{0}(x\barspace q)=1.
$$
Comparing this to the relation \eqref{eq.threeterm} for $h_n$ one readily confirms that (we use $\I=\sqrt{-1}$)
$$
h_n(x\barspace q) = \I^{-n} H_n(\I x\barspace q^{-1}).
$$

We rely on identity (3.26.12) from \cite{KoekoekSwarttouw} which states that for $x= \cos(\theta)$ and $\gamma$ arbitrary
\begin{equation}\label{eq.kseq}
G(\gamma,q,x,t):=\sum_{j=0}^{\infty} \frac{(\gamma;q)_j}{(q;q)_j} H_j(x\barspace q)t^j  = \frac{(\gamma e^{\I \theta}t;q)_{\infty}}{(e^{\I \theta}t;q)_{\infty}}\,\pFq{2}{1}{\gamma,0}{\gamma e^{\I \theta} t}{q}{e^{-\I \theta}t},
\end{equation}
where ${}_2 \phi_1$ is the basic hypergeometric function. Rewriting this expression in terms of $h_j$ instead of $H_j$ and substituting $q^{-1}$ for $q$ we find that
$$
G(\gamma,q^{-1},x,t)=\sum_{j=0}^{\infty} \frac{(\gamma;q^{-1})_j}{(q^{-1};q^{-1})_j} \I^j h_j(x/\I\barspace q)t^j.
$$
The above formula is valid so long as $q>1$.

By setting $\gamma =q^n$ we cut off the infinite sum on the right-hand side so as to be zero after $j=n$ (since $(q^n;q^{-1})_j$ for $j>n$).
Further, using the relations
$$
(q^n;q^{-1})_j = \frac{(q;q)_n}{(q;q)_{n-j}} \qquad \textrm{and}\qquad \frac{1}{(q^{-1};q^{-1})_j} = \frac{(-1)^j q^{j(j+1)/2}}{(q;q)_j},
$$
we can rewrite
$$
G(q^n,q^{-1},x,t)=\sum_{j=0}^{n} (-1)^j {n \choose j}_q q^{j(j+1)/2} \I^j h_j(x/\I\barspace q)t^j.
$$
Finally, taking $x=\I \f(1)/2$ and $t= -\I (q\alpha)^{-1/2}$ we find that
$$
G(q^n,q^{-1},\I \f(1)/2,-\I (q\alpha)^{-1/2}) =\sum_{j=0}^{n} (-1)^j {n \choose j}_q q^{j(j+1)/2} (q\alpha)^{-j/2} h_j\big(\f(1)/2\barspace q\big)
$$
where the right-hand side above now coincides with the summation in \eqref{eq.sumid}.

In light of this relation, it remains to verify that
\begin{equation}\label{eq.tover}
G\big(q^n,q^{-1},\I \f(1)/2,-\I (q\alpha)^{-1/2}\big) = \frac{q^{n(n+1)/2}}{\alpha^n}.
\end{equation}
The relation $\I \f(1)/2 = \cos(\theta)=\tfrac{1}{2}(e^{\I \theta}+e^{-\I \theta})$ yields two choices for $e^{\I \theta}$, namely $e^{\I \theta} = \I (\alpha/q)^{1/2}$ and  $e^{\I \theta} = -\I (q/\alpha)^{1/2}$. We take the second of these choices, which impies that  $e^{\I \theta}t=-\I\alpha^{-1}$ and $e^{-\I \theta}t=q^{-1}$. Thus, from \eqref{eq.kseq}, we find that that
$$
G\big(q^n,q^{-1},\I \f(1)/2,-\I (q\alpha)^{-1/2}\big)= \frac{(-q^n \alpha^{-1};q^{-1})_{\infty}}{(-\alpha^{-1};q^{-1})_{\infty}}\,\pFq{2}{1}{q^n,0}{-q^n \alpha^{-1}}{q^{-1}}{q^{-1}}.
$$
It follows from identity (1.5.3) of \cite{GasperRahman} that (after changing $q$ to $q^{-1}$ and setting $b=0$)
$$
\pFq{2}{1}{q^{n},0}{c}{q}{q} = \frac{(-c)^n q^{(n-1)(n-2)/2}}{(c;q^{-1})_n}.
$$
Using this we conclude that
$$
G\big(q^n,q^{-1},\I \f(1)/2,-\I (q\alpha)^{-1/2}\big)= \frac{(-q^n \alpha^{-1};q^{-1})_{\infty}}{(-\alpha^{-1};q^{-1})_{\infty}} \frac{\alpha^{-n} q^{n(n+1)/2}}{(-q^n\alpha^{-1};q^{-1})_{n}} =  \frac{q^{n(n+1)/2}}{\alpha^n},
$$
where the last equality uses $(-q^n \alpha^{-1};q^{-1})_{\infty}= (-q^n\alpha^{-1};q^{-1})_{n}(-\alpha^{-1};q^{-1})_{\infty}$. This proves \eqref{eq.tover} and hence completes the proof of the lemma.
\end{proof}


\end{document}